\newcommand{\Spec}{\operatorname{Spec}}
\renewcommand{\phi}{\varphi}
\newcommand{\Ker}{\operatorname{Ker}}
\newcommand{\Ima}{\operatorname{Im}}
\newcommand{\Max}{\operatorname{Max}}
\newcommand{\Min}{\operatorname{Min}}
\newcommand{\Supp}{\operatorname{Supp}}
\newcommand{\Sp}{\operatorname{Sp}}
\newtheorem{proposition}{Proposition}[section]
\newtheorem{lemma}[proposition]{Lemma}
\newtheorem{corollary}[proposition]{Corollary}
\newtheorem{theorem}[proposition]{Theorem}
\theoremstyle{definition}
\newtheorem{definition}[proposition]{Definition}
\newtheorem{example}[proposition]{Example}
\newtheorem{remark}[proposition]{Remark}
\patchcmd{\@settitle}{\uppercasenonmath\@title}{}{}{}
\patchcmd{\@setauthors}{\MakeUppercase}{}{}{}
\begin{document}

\title[Tame and wild primes]{Tame and wild primes in direct products of commutative rings}

\author[A. Tarizadeh]{Abolfazl Tarizadeh}
\address{Department of Mathematics, Faculty of Basic Sciences, University of Maragheh, Maragheh, East Azerbaijan Province, Iran.}
\email{ebulfez1978@gmail.com}

\date{}
\subjclass[2010]{14A05, 13A15, 13B30, 13B02}
\keywords{infinite direct product of rings; tame prime; wild prime; tame connected component; wild connected component \\
$^{\ast}$ Corresponding author.}

\begin{abstract} In this work, new progress has been made in understanding the structure of all prime ideals of infinite direct products of commutative rings. In particular, we observe that in an infinite direct product of nonzero rings there are two different types of prime ideals, that we call tame primes and wild primes. \\
Among the main results, we prove that the set of tame primes is an open subscheme of the prime spectrum, and this scheme is non-affine if and only if the index set is infinite. As an application, a prime ideal is a wild prime if and only if it contains the direct sum ideal. \\ 
Next, we show that an uncountable number of (wild) primes of an infinite direct product ring are induced by the (non-principal) ultrafilters of the index set (at least $2^{\mathfrak{c}}$ wild primes, $\mathfrak{c}$ is the cardinality of the continuum). This description has an important consequence which asserts that if a direct product ring has a wild prime, then the set of wild primes is infinite (uncountable). \\
The connected components of the prime spectrum of an infinite direct product ring are also investigated. We observe that, like for prime ideals, there are two types of connected components, tame ones and wild ones.
\end{abstract}

\maketitle

\section{Introduction}

If $p$ is a prime number and $n\geqslant1$, then $\mathbb{Z}/p^{n}\mathbb{Z}$ is a local zero-dimensional ring (i.e., its prime spectrum is a singleton), but quite surprisingly the Krull dimension of the direct product ring $\prod\limits_{n\geqslant1}\mathbb{Z}/p^{n}\mathbb{Z}$ is infinite, and this ring has a huge number of prime ideals (the cardinality of its set of maximal ideals is uncountable and equals $2^{\mathfrak{c}}$ where
$\mathfrak{c}$ is the cardinality of the continuum). However, the structure of all prime ideals of this ring as well as $\prod\limits_{n\geqslant1}\mathbb{Z}$ are not precisely understood yet. These examples show that fully understanding the structure of prime ideals of an infinite direct product of rings (even for very specific rings such as $\mathbb{Z}$ or $\mathbb{Z}/p^{n}\mathbb{Z}$) is a complicated problem. It is still unclear how the form of every prime ideal in an infinite direct product ring looks like. In the literature (see e.g. (
\cite{Gilmer-Heinzer}, \cite{Gilmer-Heinzer 2}, \cite{Gilmer-Heinzer 3}, \cite{Levy-Loustaunau-Shapiro} and \cite{Maroscia}), this problem is broadly considered and several interesting results have been obtained.

In this article we observe that in an infinite direct product ring there are two types of prime ideals, tame primes and wild primes. In Theorem \ref{Theorem 1 IDP}, we prove that the set of tame primes of a direct product ring $R=\prod\limits_{i\in S}R_{i}$ is an open subscheme of $\Spec(R)$, and this scheme is non-affine if and only if the index set $S$ is infinite. As an application, in Corollary \ref{Corollary I}, it is shown that a prime ideal of $R$ is a wild prime if and only if it contains the direct sum ideal $\bigoplus\limits_{i\in S}R_{i}$. In contrast to the wild primes, the structure of tame primes can be completely understood. Next in Theorem \ref{Theorem T-D}, we prove that very many wild primes (but not all) of an infinite direct product ring $R=\prod\limits_{i\in S}R_{i}$ can be described in terms of the non-principal ultrafilters of the index set $S$.
This description has a nice consequence (see Corollary \ref{Corollary 2 card}).
Then we show with an example that there are wild primes that do not fit into this description. In fact, the cardinality of the set of wild primes of an infinite direct product ring is strictly greater than the cardinality of the continuum. Note that despite the large number of wild primes, one cannot construct an explicit example of a wild prime in an infinite direct product ring, because the existence of them uses the axiom of choice and hence making it non-constructive. In Theorems \ref{Theorem 9 nine-dokuz} and \ref{Theorem 10 ten}, the cardinalities of the sets of minimal wild primes and maximal wild primes are determined. In an infinite direct product ring $R=\prod\limits_{k\in S}R_{k}$ the direct sum ideal $I=\bigoplus\limits_{k\in S}R_{k}$ plays an important role. Indeed, the problem of understanding the structure of prime ideals and several other properties of $R$ are reduced to understand the same property for the quotient ring $R/I$. Then we show that if each $R_{k}$ is a local ring then $R/I$ is canonically isomorphic to a certain localization of $R$ (see Theorem \ref{Theorem 8 eight}).

In \S3, the connected components of the prime spectrum of an infinite direct product ring are investigated. We observe that, like for prime ideals, there are two types of connected components, tame ones and wild ones. We show that if the index set $S$ is infinite, then the set of (wild) connected components of $\Spec(R)$ is uncountable (see Theorem \ref{Thm ccc} and Corollary \ref{Corollary 4 dort-four}). Next in Theorem \ref{Theorem tmri ng}, the structure of tame connected components is characterized.

We should add that although the inverse limit construction is indeed a generalization of the direct product construction, one cannot expect that the results which hold for direct products of rings also hold to inverse limits of rings as well (and vice versa). For example, let $p$ be a prime number. Then the ring of $p$-adic integers is the inverse limit of the rings $\mathbb{Z}/p^n\mathbb{Z}$ with $n\geqslant1$. It is well known that this ring is a local PID of Krull dimension one, and so it has precisely two prime ideals, the zero ideal and the ideal generated by $p$. But quite in contrast, the direct product of the rings $\mathbb{Z}/p^n\mathbb{Z}$ (which contains the ring of $p$-adic integers as a subring) is of infinite Krull dimension, it is not a domain and is a non-Noetherian ring and the number of its prime ideals is uncountable.

\section{Tame and wild primes}

In this article, all rings are assumed to be commutative. If $r$ is an element of a ring $R$, then $D(r)=\{\mathfrak{p}\in\Spec(R): r\notin\mathfrak{p}\}$ and $V(r)=\Spec(R)\setminus D(r)$. Every ring map $\phi:R\rightarrow R'$ induces a map $\phi^{\ast}:\Spec(R')\rightarrow\Spec(R)$
between the corresponding prime spectra which is given by $\mathfrak{p}\mapsto\phi^{-1}(\mathfrak{p})$. The following definition is the key notion of this article.

\begin{definition} Let $(R_{i})$ be a family of nonzero rings indexed by a set $S$ and let $R=\prod\limits_{i\in S}R_{i}$ be their direct product ring. If $\mathfrak{p}$ is a prime ideal of $R_k$ for some $k\in S$, then we call $\pi^{-1}_{k}(\mathfrak{p})$ a \emph{tame prime of $R$} where $\pi_{k}:R\rightarrow R_{k}$ is the projection map.
By a \emph{wild prime of $R$} we mean a prime ideal of $R$ which is not tame, i.e., it is not of the form $\pi^{-1}_{k}(\mathfrak{p})$.
\end{definition}

It is clear that $\pi^{-1}_{k}(\mathfrak{p})=\prod\limits_{i\in S}I_{i}$ where $I_{k}=\mathfrak{p}$ and $I_{i}=R_{i}$ for all $i\neq k$.  Throughout this article, for each $k\in S$, we call the element $e_{k}:=(\delta_{i,k})_{i\in S}$ the $k$-th unit idempotent of $R$ where $\delta_{i,k}$ is the Kronecker delta. The following result gives us information about the geometric aspects of tame primes.

\begin{theorem}\label{Theorem 1 IDP} If $R=\prod\limits_{i\in S}R_{i}$ is the direct product of nonzero rings $R_{i}$, then the set of tame primes of $R$ is an open subscheme of $\Spec(R)$. This scheme is affine if and only if the index set $S$ is finite.
\end{theorem}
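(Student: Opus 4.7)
The plan is to realize the set of tame primes as an explicit open subset of $\Spec(R)$ cut out by the unit idempotents, and then derive the affineness dichotomy from quasi-compactness of affine schemes.

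First I would note that for each $k\in S$ the projection $\pi_{k}:R\to R_{k}$ is a surjection with kernel $(1-e_{k})R$. Hence primes of the form $\pi_{k}^{-1}(\mathfrak{p})$ are precisely the primes of $R$ containing $1-e_{k}$. Since $e_{k}(1-e_{k})=0$ and $e_{k}+(1-e_{k})=1$, a prime contains $1-e_{k}$ if and only if it does not contain $e_{k}$. Therefore the set of tame primes equals
\[
U\;:=\;\bigcup_{k\in S}D(e_{k}),
\]
which is an open subset of $\Spec(R)$ and thus inherits the structure of an open subscheme.

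For the ``if'' direction of the second assertion, when $S$ is finite the pairwise orthogonal idempotents $(e_{k})_{k\in S}$ satisfy $\sum_{k\in S}e_{k}=1$, so $U=D(1)=\Spec(R)$, which is affine. For the converse, assume $S$ is infinite and, for contradiction, that $U$ is affine, hence quasi-compact. Then the cover $\{D(e_{k})\}_{k\in S}$ admits a finite subcover $D(e_{k_{1}}),\ldots,D(e_{k_{n}})$. Setting $e:=e_{k_{1}}+\cdots+e_{k_{n}}$ (an idempotent, because the summands are pairwise orthogonal) gives $D(e)=\bigcup_{j}D(e_{k_{j}})=U$.

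To reach a contradiction I would pick some $j\in S\setminus\{k_{1},\ldots,k_{n}\}$, which exists by infiniteness of $S$, together with any prime $\mathfrak{p}$ of the nonzero ring $R_{j}$. The tame prime $\pi_{j}^{-1}(\mathfrak{p})$ lies in $U$ by construction, yet $\pi_{j}(e_{k_{i}})=0$ for every $i$ forces $e\in\pi_{j}^{-1}(\mathfrak{p})$, putting this prime into $V(e)=\Spec(R)\setminus U$. This contradiction completes the proof. The only real obstacle is the opening identification of $U$ with $\bigcup_{k}D(e_{k})$; once it is in hand, both openness and the finite/infinite dichotomy follow from elementary facts about idempotents and from quasi-compactness of affine schemes.
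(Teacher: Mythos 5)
Your proposal is correct and follows essentially the same route as the paper: identify the tame primes with $\bigcup_{k\in S}D(e_{k})$ and then use quasi-compactness of affine schemes, together with the orthogonality of the unit idempotents, to rule out affineness when $S$ is infinite. The only place you diverge is in proving that every prime in $D(e_{k})$ is tame: where the paper verifies by hand that $\pi_{k}(P)$ is a proper prime ideal with $P=\pi_{k}^{-1}(\pi_{k}(P))$, you observe that $\ker\pi_{k}=(1-e_{k})R$ and invoke the standard correspondence of primes under a surjection, which is shorter and equally valid.
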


\begin{proof} Let $T$ be the set of tame primes of $R$. We first show that $T=\bigcup\limits_{k\in S}D(e_{k})$ where $e_{k}$ is the $k$-th unit idempotent of $R$ for all $k$. The inclusion $T\subseteq\bigcup\limits_{k\in S}D(e_{k})$ is obvious. To see the reverse inclusion, let $P$ be a prime ideal of $R$ with $P\in\bigcup\limits_{k\in S}D(e_{k})$. Then $e_{k}\notin P$ for some $k$. It follows that $e_{i}\in P$ for all $i\neq k$, because $e_{i}e_{k}=0$. We claim that $\pi_{k}(P)$ is a prime ideal of $R_{k}$. Clearly it is an ideal of $R_{k}$, since $\pi_{k}$ is surjective. It is also a proper ideal of $R_{k}$, because if $1\in\pi_{k}(P)$ then there exists some $a=(a_{i})\in P$ such that $a_{k}=1$, then $e_{k}=e_{k}a\in P$ which is a contradiction. Now suppose there are $a,b\in R_{k}$ such that $ab\in\pi_{k}(P)$. Then there exists some $x=(x_{i})\in P$ such $x_{k}=ab$. Then consider the elements $y=(y_{i})$ and $z=(z_{i})$ in $R$ where $y_{i}=1$ and $z_{i}=x_{i}$ for all $i\neq k$ and $y_{k}=a$ and $z_{k}=b$. Then $yz=x\in P$. Thus $y\in P$ or $z\in P$. It follows that $a\in\pi_{k}(P)$ or $b\in\pi_{k}(P)$. This establishes the claim. Now we show that $P=\pi^{-1}_{k}(\mathfrak{q})$ where $\mathfrak{q}:=\pi_{k}(P)$. The inclusion $P\subseteq\pi^{-1}_{k}(\mathfrak{q})$ is obvious. If $r=(r_{i})\in\pi^{-1}_{k}(\mathfrak{q})$ then $r_{k}\in\mathfrak{q}$. So there exists some $b=(b_{i})\in P$ such that $b_{k}=r_{k}$. Then $e_{k}r=e_{k}b\in P$. Hence, $r\in P$. Thus $P$ is a tame prime of $R$, i.e., $P\in T$. Therefore
$T=\bigcup\limits_{k\in S}D(e_{k})$ is an open subset of $\Spec(R)$. If the index set $S$ is finite, then it can be seen that $T=\Spec(R)$ is an affine scheme. Conversely, there exists a natural number $n\geqslant1$ such that $T=\bigcup\limits_{k=1}^{n}D(e_{i_{k}})=D(e)$ where $e:=\sum\limits_{k=1}^{n}e_{i_{k}}$, because the underlying space of every affine scheme is quasi-compact.  If $S$ is infinite, then we may choose some $d\in S\setminus\{i_{1},\ldots,i_{n}\}$. Thus $e_{d}e=0$ and so $D(e_{d})=D(e_{d})\cap D(e)=D(e_{d}e)=D(0)$. Since $e_{d}$ is an idempotent, we get that $e_{d}=0$. But this is a contradiction, because $R_{d}$ is a nonzero ring.
\end{proof}

The above theorem has several important consequences:

\begin{corollary}\label{Corollary I} The set of wild primes of the ring $R=\prod\limits_{i\in S}R_{i}$ is the closed subset $V(I)$ of $\Spec(R)$ where $I=\bigoplus\limits_{i\in S}R_{i}$.
\end{corollary}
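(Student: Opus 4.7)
The plan is to derive this immediately from Theorem \ref{Theorem 1 IDP} by passing to complements and identifying the relevant ideal. By definition, a prime $P$ of $R$ is wild precisely when it is not tame, so the set of wild primes equals $\Spec(R)\setminus T$, where $T$ is the set of tame primes.

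From the proof of Theorem \ref{Theorem 1 IDP} we have the identification $T=\bigcup_{k\in S}D(e_{k})$. Taking complements gives
\[
\Spec(R)\setminus T \;=\; \bigcap_{k\in S}V(e_{k}) \;=\; V\!\left(\sum_{k\in S}Re_{k}\right).
\]
So what remains is to check that the ideal $J:=\sum_{k\in S}Re_{k}$ generated by the unit idempotents coincides with the direct sum ideal $I=\bigoplus_{i\in S}R_{i}\subseteq R$.

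For this identification, I would argue in both directions. For any $k$ and any $r=(r_i)\in R$, the element $re_k$ has $k$-th component $r_k$ and all other components zero, so $re_k\in I$; summing finitely many such elements keeps the support finite, hence $J\subseteq I$. Conversely, if $x=(x_i)\in I$ has support contained in a finite set $\{k_1,\dots,k_n\}\subseteq S$, then $x=\sum_{j=1}^{n}xe_{k_j}$, which lies in $J$. Thus $J=I$, and therefore $\Spec(R)\setminus T = V(I)$, which is exactly the claim.

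There is no real obstacle here; the only thing worth being careful about is that $I$ in the statement denotes the direct sum as an \emph{ideal} of $R$ (the set of tuples with finite support), so that the equality $J=I$ is a genuine identification rather than a categorical isomorphism. Once that is made explicit, the corollary is an immediate consequence of Theorem \ref{Theorem 1 IDP}.
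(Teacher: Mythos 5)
Your proposal is correct and follows essentially the same route as the paper: both pass from $T=\bigcup_{k\in S}D(e_{k})$ (from the proof of Theorem \ref{Theorem 1 IDP}) to $\bigcap_{k\in S}V(e_{k})=V(I)$ via the observation that $I$ is the ideal generated by the unit idempotents $e_{k}$. The only difference is that you spell out the verification that $(e_{k}:k\in S)=\bigoplus_{i\in S}R_{i}$, which the paper simply asserts.
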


\begin{proof} Let $X$ be the set of wild primes of $R$. Then by Theorem \ref{Theorem 1 IDP} and its proof, $X=\bigcap\limits_{i\in S}V(e_{i})$. But the direct sum ideal $I=\bigoplus\limits_{i\in S}R_{i}$ is generated by the $e_{i}$, i.e. $I=(e_{i}: i\in S)$. Hence, $V(I)=\bigcap\limits_{i\in S}V(e_{i})=X$.
\end{proof}

\begin{remark} It is well known that if we have finitely many pairwise disjoint affine opens $U_k=\Spec(A_k)$ (with $k=1,\ldots,n$) in a scheme $X$, then their union $U:=\bigcup\limits_{k=1}^{n}U_k$ is an affine open. Indeed, the open subscheme $U$ is isomorphic to the affine spectrum of the direct product ring $\prod\limits_{k=1}^{n}A_k$.
However, this fact does not necessarily hold for an infinite number of affine opens. For example, consider the open subscheme in Theorem \ref{Theorem 1 IDP}.
\end{remark}

\begin{example} Let $X$ be a scheme with the structure sheaf $\mathscr{O}_{X}$ and $f\in\mathscr{O}_{X}(X)$ a global section. If $X$ is an affine scheme, then the principal open subset $X_f=\{x \in X : f_x\notin\mathfrak{m}_{x}\}$ is an affine open of $X$. But in general, the open subscheme $X_f$ is no longer an affine open of the scheme $X$. For instance, $X_1=X$. It seems a bit difficult to find other examples of this type that are nontrivial (i.e. $f\neq1$). But we will give an interesting basic example. To this end, let $R=\prod\limits_{k\geqslant1}\mathbb{Z}$ and $X=\Spec(R)$. Let $U=\bigcup\limits_{k\geqslant1} D(e_k)$ and $W=\bigcup\limits_{k\geqslant1} D(e_{2k})$. We know that the open subschemes $W\subset U$ are non-affine (since their underlying spaces are not quasi-compact). Then consider $f=(0,1,0,1,...) \in R$ (i.e. every odd component of $f$ is zero and every even component is 1) and let $g=f_{|_{U}}$ which is an element of $\mathscr{O}_X (U)$. Then we have $W=U_g$. Indeed, if $\mathfrak{p}\in W$ then $fe_{2k}=e_{2k}\notin\mathfrak{p}$ for some $k$ and so $f\notin\mathfrak{p}$. This shows that $g_{\mathfrak{p}}=f_{\mathfrak{p}}=f/1\notin
\mathfrak{p}R_{\mathfrak{p}}$ and so $\mathfrak{p}\in U_g$. To see the reverse inclusion, take $\mathfrak{p}\in U_g$. Thus $f\notin\mathfrak{p}$. If $\mathfrak{p}\notin W$ then $e_{2k}\in\mathfrak{p}$ for all $k\geqslant1$. But $U_g\subseteq U$ and so $e_d\notin\mathfrak{p}$ for some odd number $d\geqslant1$. But $0=fe_{d}\notin\mathfrak{p}$ which is a contradiction. Hence, $W=U_g$.
\end{example}

\begin{remark} If $P$ is a tame prime of $R=\prod\limits_{i\in S}R_{i}$ then $P=\prod\limits_{i\in S}\mathfrak{p}_{i}$ where $\mathfrak{p}_{k}:=\pi_{k}(P)$ is a prime ideal of $R_{k}$ for some $k\in S$ and $\mathfrak{p}_{i}=R_{i}$ for all $i\neq k$. In this case, we also have the following canonical isomorphisms of rings: $R/P\simeq R_{k}/\mathfrak{p}_{k}$ and $R_{P}\simeq(R_{k})_{\mathfrak{p}_{k}}$. Also note that if $P'$ is a wild prime of $R$ then $P+P'=R$. This shows that for any totally ordered set of prime ideals of $R$, if one of them is a tame (resp. wild) prime then all elements of this set are tame (resp. wild) primes.
\end{remark}

\begin{corollary}\label{coro 5 wno} If the index set $S$ is infinite, then the set of wild primes of $R=\prod\limits_{i\in S}R_{i}$ is not an open subset of $\Spec(R)$.
\end{corollary}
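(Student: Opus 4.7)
The plan is to argue by contradiction, leveraging the quasi-compactness of $\Spec(R)$. Suppose toward a contradiction that the set of wild primes $W$ is open in $\Spec(R)$. By Corollary \ref{Corollary I}, $W=V(I)$ is always closed, so under this assumption $W$ would be clopen and its complement $T$, the set of tame primes, would be closed in $\Spec(R)$. Since $\Spec(R)$ is quasi-compact, closed subsets are quasi-compact, and hence $T$ would itself be a quasi-compact space.

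The contradiction then comes from reusing the cover and idempotent computation already employed in the proof of Theorem \ref{Theorem 1 IDP}: we have $T=\bigcup_{k\in S}D(e_{k})$, so quasi-compactness of $T$ would yield a finite subcover $T=\bigcup_{k=1}^{n}D(e_{i_{k}})=D(e)$ with $e:=\sum_{k=1}^{n}e_{i_{k}}$. Since $S$ is infinite, we can pick some $d\in S\setminus\{i_{1},\ldots,i_{n}\}$. Then $e_{d}e=0$ forces $D(e_{d})=D(e_{d})\cap D(e)=D(e_{d}e)=D(0)=\emptyset$, and since $e_{d}$ is idempotent, this forces $e_{d}=0$, contradicting the assumption that $R_{d}$ is nonzero.

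I expect no real obstacle: the statement is essentially a direct corollary of what the proof of Theorem \ref{Theorem 1 IDP} already establishes. The only conceptual step beyond that proof is the observation that closed subsets of a quasi-compact space are quasi-compact, which upgrades the ``$T$ is not affine'' conclusion to ``$T$ is not closed in $\Spec(R)$,'' equivalently, $W$ is not open in $\Spec(R)$.
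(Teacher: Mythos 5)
Your proof is correct, but it takes a genuinely different route from the paper's. The paper assumes the set $W$ of wild primes is open, notes that $W=V(I)$ is then clopen, and invokes the cited result \cite[Theorem 1.1]{Tarizadeh-Sharma} to write $V(I)=V(e)$ for an idempotent $e$; an algebraic computation with radicals ($e_{k}\in\sqrt{Re}$ forces $e_{k}=e_{k}e$ for every $k$, hence $e=1$ and $I=R$) then yields the contradiction. You instead stay entirely topological: $W$ open makes $T$ closed in the quasi-compact space $\Spec(R)$, hence quasi-compact, and the finite-subcover computation already carried out in the proof of Theorem \ref{Theorem 1 IDP} (with the orthogonality $e_{d}e=0$ forcing $e_{d}=0$) finishes the job. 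Your argument is self-contained --- it needs no external structure theorem for clopen subsets, only the standard facts that $\Spec(R)$ is quasi-compact and that closed subsets of quasi-compact spaces are quasi-compact --- and it makes transparent that the corollary is really the same phenomenon as the non-affineness of $T$. The paper's route, by contrast, pays the price of a citation but extracts slightly more algebraic information along the way (namely that $1$ is the only idempotent $e$ with $\sqrt{Re}=\sqrt{I}$). Both proofs are valid; yours is arguably the more economical given that Theorem \ref{Theorem 1 IDP} is already in hand.
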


\begin{proof} If the set of wild primes of $R$ is an open subset of $\Spec(R)$, then by Corollary \ref{Corollary I} and \cite[Theorem 1.1]{Tarizadeh-Sharma}, there exists an idempotent $e\in R$ such that $V(I)=V(e)$ where $I=\bigoplus\limits_{i\in S}R_{i}$. It follows that $\sqrt{I}=\sqrt{Re}$. For each $k\in S$, we have $e_{k}\in I$ and so $e_{k}\in\sqrt{Re}$.
Thus there exists some $d\geqslant1$ such that $e_{k}=(e_{k})^{d}\in Re$. So $e_{k}=re$ for some $r\in R$. It follows that $e_{k}(1-e)=re(1-e)=0$ and so $e_{k}=e_{k}e$. Thus $e=1$ is the unit element of $R$. It follows that $I=R$ which is a contradiction, because the index set $S$ is infinite (and each $R_{k}\neq0$) and so $I\neq R$.
\end{proof}

\begin{remark} We know that every finitely generated flat module admits the rank map (because every finitely generated flat module over a local ring is a free module). It is well known that if the rank map of a finitely generated flat module is locally constant (i.e. it is a constant map in an open neighbourhood of every prime ideal), then it is a projective module. Note that every locally constant map over a quasi-compact space takes only finitely many values (but a map with finitely many values is not necessarily locally constant). We also know that many of the finitely generated flat modules are projective (see e.g. \cite{Tanri}). But here we give an example of a finitely generated flat module which is not projective and at the same time its rank map takes only finitely many values.
The direct sum ideal $I=\bigoplus\limits_{i\in S}R_{i}$ of $R=\prod\limits_{i\in S}R_{i}$ is generated by the idempotents $e_{k}$, and so $R/I$ is a flat $R$-module. It is well known that the annihilator of every finitely generated projective module is generated by an idempotent element. This shows that if the index set $S$ is infinite then $R/I$ is a not a projective $R$-module. The rank map of the finitely generated flat $R$-module $R/I$ takes only the values 0 and 1 over $\Spec(R)$. Indeed, if $\mathfrak{p}\notin V(I)$ then $(R/I)_{\mathfrak{p}}=0$. If $\mathfrak{p}\in V(I)$ then $(R/I)_{\mathfrak{p}}\simeq R_{\mathfrak{p}}/IR_{\mathfrak{p}}\simeq R_{\mathfrak{p}}$, because for each $k$, $1-e_{k}\notin\mathfrak{p}$ and so $IR_{\mathfrak{p}}=0$.
\end{remark}

If for each $k\in S$, $I_{k}$ is an ideal of a ring $R_{k}$, then $\bigoplus\limits_{k\in S}I_{k}\subseteq\prod\limits_{k\in S}I_{k}$ are ideals of $R=\prod\limits_{k\in S}R_{k}$, and the quotient ring $R/\prod\limits_{k\in S}I_{k}$ is canonically isomorphic to $\prod\limits_{k\in S}R_{k}/I_{k}$. If $\phi:A=\prod\limits_{i\in S}A_{i}\rightarrow B=\prod\limits_{i\in S}B_{i}$ is the ring map induced by a family of ring maps $(\phi_{i}:A_{i}\rightarrow B_{i})$, then $\Ker(\phi)=\prod\limits_{i\in S}\Ker(\phi_{i})$ and $\Ima(\phi)=\prod\limits_{i\in S}\Ima(\phi_{i})$. In particular, $\phi$ is injective if and only if each $\phi_{i}$ is injective. Similarly, $\phi$ is surjective if and only if each $\phi_{i}$ is surjective.

\begin{corollary}\label{Corollary 3} Let $\phi:A=\prod\limits_{i\in S}A_{i}\rightarrow B=\prod\limits_{i\in S}B_{i}$ be the ring map induced by a family of ring maps $(\phi_{i}:A_{i}\rightarrow B_{i})$. Then a prime ideal $\mathfrak{p}$ of $B$ is a wild prime if and only if $\phi^{-1}(\mathfrak{p})$ is a wild prime of $A$.
\end{corollary}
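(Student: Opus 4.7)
The plan is to reduce everything to the characterization of wild primes given in Corollary \ref{Corollary I}, namely that a prime ideal is wild precisely when it contains the direct sum ideal. The key observation is that for each $k \in S$, the induced map $\phi$ sends the $k$-th unit idempotent $e_k^A = (\delta_{i,k})_{i\in S}$ of $A$ to the $k$-th unit idempotent $e_k^B$ of $B$, since $\phi_k(1_{A_k}) = 1_{B_k}$ and $\phi_i(0) = 0$ for $i \neq k$. Since the direct sum ideal in each ring is generated by these unit idempotents (as noted in the proof of Corollary \ref{Corollary I}), this single compatibility is essentially all we need.

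First I would establish that $\phi^{-1}(\mathfrak{p})$ is indeed a prime ideal of $A$, which is standard for any ring map. Then for the forward direction, assume $\mathfrak{p}$ is wild in $B$. By Corollary \ref{Corollary I}, $\bigoplus_{i\in S}B_i \subseteq \mathfrak{p}$, so in particular $e_k^B \in \mathfrak{p}$ for every $k \in S$. Since $\phi(e_k^A) = e_k^B$, this yields $e_k^A \in \phi^{-1}(\mathfrak{p})$ for every $k$, and because the idempotents $e_k^A$ generate $\bigoplus_{i\in S} A_i$, we conclude $\bigoplus_{i\in S} A_i \subseteq \phi^{-1}(\mathfrak{p})$. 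Another application of Corollary \ref{Corollary I} shows that $\phi^{-1}(\mathfrak{p})$ is a wild prime of $A$.

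For the converse, I would argue by contrapositive. Suppose $\mathfrak{p}$ is a tame prime of $B$. Then by Theorem \ref{Theorem 1 IDP} (or equivalently the proof of Corollary \ref{Corollary I}) there exists some $k \in S$ with $e_k^B \notin \mathfrak{p}$. Since $\phi(e_k^A) = e_k^B$, this gives $e_k^A \notin \phi^{-1}(\mathfrak{p})$, so $\phi^{-1}(\mathfrak{p})$ does not contain the direct sum ideal $\bigoplus_{i\in S} A_i$. By Corollary \ref{Corollary I} again, $\phi^{-1}(\mathfrak{p})$ is tame.

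There is no real obstacle here; the whole argument is a transparent application of Corollary \ref{Corollary I} combined with the elementary fact that $\phi$ preserves unit idempotents. The only thing to be slightly careful about is that the index set $S$ is the same for both products (so that the unit idempotents correspond under $\phi$); this is given in the hypothesis of the corollary.
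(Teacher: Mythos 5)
Your proposal is correct. The forward direction is exactly the paper's argument: a wild prime of $B$ contains $\bigoplus_{i\in S}B_{i}$ by Corollary \ref{Corollary I}, the identity $\phi(e_{k}^{A})=e_{k}^{B}$ pulls the generators of the direct sum ideal of $B$ back to those of $A$, and Corollary \ref{Corollary I} applies again. Where you diverge slightly is in the converse: you argue that a tame $\mathfrak{p}$ misses some $e_{k}^{B}$ (via the description $T=\bigcup_{k}D(e_{k})$ from Theorem \ref{Theorem 1 IDP}), hence $\phi^{-1}(\mathfrak{p})$ misses $e_{k}^{A}$ and so cannot contain $\bigoplus_{i\in S}A_{i}$, whence it is tame by Corollary \ref{Corollary I}. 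The paper instead writes the tame prime explicitly as $\mathfrak{p}=\pi_{k}^{-1}(\mathfrak{p}_{k})$ and uses the commutative square $\pi_{k}\circ\phi=\phi_{k}\circ\pi_{k}'$ to exhibit $\phi^{-1}(\mathfrak{p})=(\pi_{k}')^{-1}\bigl(\phi_{k}^{-1}(\mathfrak{p}_{k})\bigr)$ directly in tame form. Both arguments are valid; yours is more uniform in that both directions run entirely through the idempotent criterion of Corollary \ref{Corollary I}, while the paper's version has the small added benefit of identifying the contracted tame prime explicitly in terms of $\phi_{k}^{-1}(\mathfrak{p}_{k})$.
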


\begin{proof} If $\mathfrak{p}$ is a wild prime of $B$, then by Corollary \ref{Corollary I}, $\bigoplus\limits_{i\in S}B_{i}\subseteq\mathfrak{p}$. It is obvious that the image of each $k$-th unit idempotent $e_{k}$ of $A$ under $\phi$ is the corresponding $k$-th unit idempotent of $B$, i.e., $\phi(e_{k})=e'_{k}$. This yields that $\bigoplus\limits_{i\in S}A_{i}\subseteq\phi^{-1}(\bigoplus\limits_{i\in S}B_{i})\subseteq\phi^{-1}(\mathfrak{p})$. Then, again by  Corollary \ref{Corollary I}, $\phi^{-1}(\mathfrak{p})$ is a wild prime of $A$. Conversely, suppose $\mathfrak{p}$ is a tame prime of $B$. So $\mathfrak{p}=\pi^{-1}_{k}(\mathfrak{p}_{k})$ where $\mathfrak{p}_{k}$ is a prime ideal of $B_{k}$ for some $k$. Then using the following commutative diagram:
$$\xymatrix{
A\ar[r]^{\phi}\ar[d]^{\pi'_{k}} &B\ar[d]^{\pi_{k}}\\A_{k}\ar[r]^{\phi_{k}}&B_{k}}$$ we get that $\phi^{-1}(\mathfrak{p})=(\pi'_{k})^{-1}\big(\phi_{k}^{-1}(\mathfrak{p}_{k})\big)$ is a tame prime of $A$ which is a contradiction.
\end{proof}

\begin{definition}\label{Def 1} If a function $f:\Spec(A)\rightarrow\Spec(B)$ has the property that a prime ideal of $A=\prod\limits_{i\in S}A_{i}$  is a wild prime  if and only if its image under $f$ is a wild prime of $B=\prod\limits_{k\in S'}B_{k}$ then we say that $f$ \emph{preserves wild primes}.
\end{definition}

It is easy to check that a map $f:\Spec(A)\rightarrow\Spec(B)$ preserves wild primes if and only if $f$ preserves tame primes (in a similar sense).

\begin{example} The map $\Spec(B)\rightarrow\Spec(A)$ induced by $\phi$ in Corollary \ref{Corollary 3} preserves wild primes. We also give examples of maps that do not preserve wild primes. Assume the index set $S$ is infinite, so we may choose a wild prime $\mathfrak{q}$ in the ring $R=\prod\limits_{i\in S}R_{i}$. Consider the canonical ring map $\phi:R\rightarrow R'=\prod\limits_{\mathfrak{p}\in\Spec(R)}R/\mathfrak{p}$ given by $r\mapsto(r+\mathfrak{p})_{\mathfrak{p}\in\Spec(R)}$. Now the map $\Spec(R')\rightarrow\Spec(R)$ induced by $\phi$ does not preserve wild primes, because $P:=\pi^{-1}(0)$ is a tame prime of $R'$ where $\pi:R'\rightarrow R/\mathfrak{q}$ is the projection map, but $\phi^{-1}(P)=\mathfrak{q}$ since $\pi\phi:R\rightarrow R/\mathfrak{q}$ is the canonical map. As a second example, consider the (injective) canonical ring map $\psi:R\rightarrow R'=\prod\limits_{\mathfrak{p}\in\Spec(R)}R_{\mathfrak{p}}$ given by $r\mapsto(r/1)_{\mathfrak{p}\in\Spec(R)}$. Then the map $\Spec(R')\rightarrow\Spec(R)$ induced by $\psi$ does not preserve wild primes, because $P:=\pi^{-1}(\mathfrak{q}R_{\mathfrak{q}})$ is a tame prime of $R'$ where $\pi:R'\rightarrow R_{\mathfrak{q}}$ is the projection map, but $\psi^{-1}(P)=\mathfrak{q}$ since $\pi\psi:R\rightarrow R_{\mathfrak{q}}$ is the canonical map.
\end{example}

For a set $S$ by $\mathcal{P}(S)\simeq\prod\limits_{k\in S}\mathbb{Z}_{2}$ we mean the power set ring of $S$ which is a Boolean ring where $\mathbb{Z}_{2}=\{0,1\}$ is the field of integers modulo two. We know that in a Boolean ring, every prime ideal is a maximal ideal. If $S$ is finite then the set of prime ideals of $\mathcal{P}(S)$ has the cardinality $|S|$. But if $S$ is infinite, then the set of prime ideals of $\mathcal{P}(S)$ has the cardinality $2^{2^{|S|}}$.
For more information on the power set ring we refer the interested reader to \cite{Tarizadeh-Taheri}. The following result shows that many (but not all) wild primes of an infinite direct product ring $R=\prod\limits_{i\in S}R_{i}$ can be described in terms of the non-principal ultrafilters of the index set $S$.

\begin{theorem}\label{Theorem T-D} If $R=\prod\limits_{i\in S}R_{i}$ is the direct product of nonzero rings $R_{i}$, then we have a continuous imbedding $\Spec(\mathcal{P}(S))\rightarrow\Spec(R)$ which preserves wild primes.
\end{theorem}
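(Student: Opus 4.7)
My plan is to construct the map explicitly via ultrafilters, together with an arbitrary (but fixed) choice of prime in each factor. Since every $R_i$ is nonzero, pick some $\mathfrak{p}_i \in \Spec(R_i)$ for each $i \in S$; this is where the axiom of choice enters, consistent with the paper's earlier remarks. Identify $\Spec(\mathcal{P}(S))$ with the set of ultrafilters on $S$ in the standard way (a prime $\mathfrak{q}$ corresponds to the ultrafilter $\mathcal{U}_{\mathfrak{q}} = \{A \subseteq S : e_A \notin \mathfrak{q}\}$) and define
\[
f : \Spec(\mathcal{P}(S)) \longrightarrow \Spec(R), \qquad f(\mathcal{U}) \;=\; P_{\mathcal{U}} := \bigl\{r = (r_i) \in R : \{i \in S : r_i \in \mathfrak{p}_i\} \in \mathcal{U}\bigr\}.
\]
The first thing to verify is that $P_{\mathcal{U}}$ is a prime ideal: closure under addition and $R$-multiplication follow from the filter axioms (intersections of members are members, supersets of members are members); $1 \notin P_{\mathcal{U}}$ since $\{i : 1 \in \mathfrak{p}_i\} = \emptyset \notin \mathcal{U}$; primality uses the identity $\{i : r_i s_i \in \mathfrak{p}_i\} = \{i : r_i \in \mathfrak{p}_i\} \cup \{i : s_i \in \mathfrak{p}_i\}$ (valid because every $\mathfrak{p}_i$ is prime) together with the ultrafilter dichotomy $A \cup B \in \mathcal{U} \Rightarrow A \in \mathcal{U}$ or $B \in \mathcal{U}$.

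Next I would handle the topology. For continuity, a direct computation gives $f^{-1}(D(r)) = \{\mathcal{U} : \{i : r_i \notin \mathfrak{p}_i\} \in \mathcal{U}\}$, which is a basic clopen of $\Spec(\mathcal{P}(S))$. For the embedding property I would prove the sharper identity $f(D(A)) = f(\Spec(\mathcal{P}(S))) \cap D(e_A)$ for every $A \subseteq S$, where $e_A \in R$ is the idempotent supported on $A$. This holds because $(e_A)_i = 1 \in \mathfrak{p}_i$ is impossible while $(e_A)_i = 0 \in \mathfrak{p}_i$ is automatic, whence $e_A \in P_{\mathcal{U}}$ if and only if $S \setminus A \in \mathcal{U}$, equivalently $A \notin \mathcal{U}$. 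Since the $D(A)$ form a basis of $\Spec(\mathcal{P}(S))$, $f$ is open onto its image; combined with continuity and injectivity (which drops out of the same idempotent computation, with $A \in \mathcal{U} \setminus \mathcal{V}$ giving $e_A \in P_{\mathcal{V}} \setminus P_{\mathcal{U}}$), this makes $f$ a topological embedding.

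It remains to verify preservation of wild primes. A principal ultrafilter $\mathcal{U}_k$ at some $k \in S$ (that is, the tame prime of $\mathcal{P}(S)$ corresponding to $k$) yields $P_{\mathcal{U}_k} = \pi_k^{-1}(\mathfrak{p}_k)$, manifestly a tame prime of $R$. Conversely, any non-principal ultrafilter $\mathcal{U}$ contains every cofinite subset of $S$, so for any $r \in I := \bigoplus_{i \in S} R_i$ the set $\{i : r_i \in \mathfrak{p}_i\}$ is cofinite and thus lies in $\mathcal{U}$, forcing $I \subseteq P_{\mathcal{U}}$; then Corollary~\ref{Corollary I} identifies $P_{\mathcal{U}}$ as wild. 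The main technical point is primality of $P_{\mathcal{U}}$, which is really the classical ultrafilter construction; the remaining topological verifications amount to careful bookkeeping with the idempotents $e_A$.
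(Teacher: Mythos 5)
Your proposal is correct and follows essentially the same route as the paper: fix a prime $\mathfrak{p}_i$ in each nonzero factor, send an ultrafilter $\mathcal{U}$ (equivalently, a maximal ideal $M$ of $\mathcal{P}(S)$) to $P_{\mathcal{U}}=\{a=(a_i):\{i\in S: a_i\in\mathfrak{p}_i\}\in\mathcal{U}\}$, and verify primality, injectivity, continuity and wild-prime preservation by the same idempotent and ultrafilter computations the paper uses. The only genuine difference is that you also check $f(D(A))=f(\Spec(\mathcal{P}(S)))\cap D(e_A)$ to get openness onto the image, which justifies the word ``imbedding'' slightly more completely than the paper's proof, which records only injectivity and continuity.
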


\begin{proof} Each $R_{i}$ is a nonzero ring, so it has at least a prime ideal $\mathfrak{p}_{i}$. For any  $a=(a_{i})\in R=\prod\limits_{i\in S}R_{i}$ setting $a^{\ast}:=\{i\in S: a_{i}\in\mathfrak{p}_{i}\}$. If $M$ is a prime (maximal) ideal of the power set ring $\mathcal{P}(S)$, or equivalently, $\mathcal{P}(S)\setminus M$ is an ultrafilter of $S$, then we first show that $M^{\ast}:=\{a\in R: a^{\ast}\notin M\}$ is a prime ideal of $R$. For the zero element 0 we have $0^{\ast}=S\notin M$ thus $0\in M^{\ast}$, and so $M^{\ast}$ is nonempty. If $a,b\in M^{\ast}$ then $a+b\in M^{\ast}$, because $a^{\ast}\cap b^{\ast}\subseteq (a+b)^{\ast}$ and so $(a+b)^{\ast}\notin M$. If $r\in R$ and $a\in M^{\ast}$ then $ra\in M^{\ast}$, since $a^{\ast}\subseteq r^{\ast}\cup a^{\ast}=(ra)^{\ast}$ and so $(ra)^{\ast}\notin M$. We also have $1^{\ast}=\emptyset\in M$ and so $1\notin M^{\ast}$.
Thus $M^{\ast}$ is a proper ideal of $R$. Now, suppose $ab\in M^{\ast}$ for some $a,b\in R$. If none of $a$ and $b$ is a member of $M$, then $a^{\ast}\in M$ and $b^{\ast}\in M$, and so $(ab)^{\ast}=a^{\ast}\cup b^{\ast}=a^{\ast}+b^{\ast}+a^{\ast}\cap b^{\ast}\in M$, a contradiction. Hence, $M^{\ast}$ is a prime ideal of $R$.
Next we show that the map
$\Spec(\mathcal{P}(S))\rightarrow\Spec(R)$ given by $M\mapsto M^{\ast}$ is injective. Suppose $M^{\ast}=N^{\ast}$. If $A\in M$ then consider the element $a=(a_{i})$ where $a_{i}=1$ for all $i\in A$ and otherwise $a_{i}=0$. Then $a^{\ast}=S\setminus A\notin M$ thus $a\in M^{\ast}=N^{\ast}$ and so $a^{\ast}\notin N$. This yields that $A\in N$. It follows that $M\subseteq N$ and so $M=N$. This map is also continuous, because the inverse image of $D(a)$ under this map equals $V(a^{\ast})=D(1-a^{\ast})$.
Finally, we show that this map preserves wild primes. If $M$ is a wild prime (i.e., non-principal maximal ideal) of $\mathcal{P}(S)$, then $\{k\}\in M$ for all $k\in S$. For the $k$-th unit idempotent $e_{k}$ we have $(e_{k})^{\ast}=S\setminus\{k\}\notin M$ thus $e_{k}\in M^{\ast}$ and so $\bigoplus\limits_{k\in S}R_{k}\subseteq M^{\ast}$. Then by Corollary \ref{Corollary I}, $M^{\ast}$ is a wild prime of $R$. Conversely, let $M^{\ast}$ be a wild prime of $R$ where $M$ is a maximal ideal of $\mathcal{P}(S)$. If $M$ is a tame prime of $\mathcal{P}(S)$ then $M=\mathcal{P}(S\setminus\{k\})$ for some $k\in S$. It follows that $M^{\ast}=\pi^{-1}_{k}(\mathfrak{p}_{k})$ which is a contradiction.
\end{proof}

\begin{example}\label{Remarh i} In Theorem \ref{Theorem T-D}, we observed that many (uncountable number of) wild primes of an infinite direct product ring $R=\prod\limits_{i\in S}R_{i}$ can be described in terms of the non-principal ultrafilters of $S$ (i.e., wild primes of the power set ring of $S$). However, note that every wild prime of $R$ does not necessarily fit into this description. For example, consider $R=\prod\limits_{n\geqslant1}\mathbb{Z}/p^{n}\mathbb{Z}$ with $p$ a prime number. Then each $a_{n}=p+p^{n}\mathbb{Z}\in\mathbb{Z}/p^{n}\mathbb{Z}$ is nilpotent. Thus for every maximal ideal $M$ of $\mathcal{P}(S)$ with $S=\{1,2,3,\ldots\}$ then the element $(a_{1},a_{2},a_{3},\ldots)$ is a member of $M^{\ast}$. But this element is not nilpotent, and hence not contained in some prime ideal $P$ of $R$. So $P$ is a wild prime and it is not of the form $M^{\ast}$.
\end{example}

\begin{remark}\label{Remark ii} It is well known that every infinite cardinal $\kappa$ is idempotent: $\kappa\cdot\kappa=\kappa$ (in fact, this statement is equivalent to the axiom of choice). In particular, if $\alpha$ is an infinite cardinal and $\beta$ is a nonzero cardinal, then $\alpha+\beta=\alpha\cdot\beta=\max\{\alpha,\beta\}$.
Note that $B^{A}=$ ``the set of all functions from a set $A$ to a set $B$'' is contained in $\mathcal{P}(A\times B)$. Using this, then we also obtain that if $\alpha$ is an infinite cardinal and $2\leqslant\beta\leqslant\alpha$, then $\beta^{\alpha}=2^{\alpha}$.
\end{remark}

The following result shows that the set of wild primes of a direct product ring is either empty or uncountable, according to whether the index set is finite or infinite.

\begin{corollary}\label{Corollary 2 card} For $R=\prod\limits_{i\in S}R_{i}$ the following statements are equivalent. \\
$\mathbf{(i)}$ $R$ has a wild prime. \\
$\mathbf{(ii)}$ The index set $S$ is infinite. \\
$\mathbf{(iii)}$ The set of wild primes of $R$ has the cardinality $\geqslant2^{2^{|S|}}$.
\end{corollary}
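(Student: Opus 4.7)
The plan is to prove the chain $(\mathrm{iii}) \Rightarrow (\mathrm{i}) \Rightarrow (\mathrm{ii}) \Rightarrow (\mathrm{iii})$. The first two implications are essentially immediate from what we have already established, while the third is the substantive content and will rely on Theorem \ref{Theorem T-D} together with the cardinality of the prime spectrum of the power set ring.

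For $(\mathrm{iii}) \Rightarrow (\mathrm{i})$, simply observe that $2^{2^{|S|}} \geqslant 1$, so the set of wild primes is nonempty. For $(\mathrm{i}) \Rightarrow (\mathrm{ii})$ I would argue the contrapositive: if $S$ is finite, then by Theorem \ref{Theorem 1 IDP} the set of tame primes equals $\Spec(R)$, so $R$ has no wild prime.

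The heart of the proof is $(\mathrm{ii}) \Rightarrow (\mathrm{iii})$. Assuming $S$ is infinite, I would invoke the injection $\Spec(\mathcal{P}(S)) \rightarrow \Spec(R)$, $M \mapsto M^{\ast}$, constructed in Theorem \ref{Theorem T-D}, which preserves wild primes. Thus it suffices to exhibit $2^{2^{|S|}}$ wild primes (i.e.\ non-principal maximal ideals) in $\mathcal{P}(S)$. As recalled just before Theorem \ref{Theorem T-D}, the total number of prime ideals of $\mathcal{P}(S)$ is $2^{2^{|S|}}$, while the tame primes of $\mathcal{P}(S)$ are precisely the principal maximal ideals $\mathcal{P}(S \setminus \{k\})$ indexed by $k \in S$, and so have cardinality $|S|$.

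The final step is the cardinal arithmetic: the number of wild primes of $\mathcal{P}(S)$ is $2^{2^{|S|}} - |S|$, and since $|S| < 2^{|S|} < 2^{2^{|S|}}$, by the remark on infinite cardinal sums in Remark \ref{Remark ii} this difference equals $2^{2^{|S|}}$. Applying the wild-prime-preserving injection then transports these to at least $2^{2^{|S|}}$ wild primes of $R$. I do not anticipate a genuine obstacle here; the only thing to be careful about is citing the correct cardinality of $\Spec(\mathcal{P}(S))$ for infinite $S$ and noting that removing a set of strictly smaller cardinality does not decrease it.
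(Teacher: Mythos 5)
Your proposal is correct and follows essentially the same route as the paper: both rely on the embedding of Theorem \ref{Theorem T-D}, the fact that $|\Spec(\mathcal{P}(S))|=2^{2^{|S|}}$, Cantor's theorem, and Remark \ref{Remark ii} to conclude that $\mathcal{P}(S)$ has $2^{2^{|S|}}$ non-principal (wild) maximal ideals which transport to wild primes of $R$. The only cosmetic caveat is that the expression $2^{2^{|S|}}-|S|$ is not a well-defined cardinal operation; it is better phrased, as you in effect do, by noting that a union of two sets of cardinalities $\alpha$ and $\beta$ with $\beta<\alpha$ and $\alpha$ infinite has cardinality $\alpha$, so the complement of the tame primes must still have cardinality $2^{2^{|S|}}$.
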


\begin{proof} The implications (i)$\Rightarrow$(ii) and (iii)$\Rightarrow$(i) are clear. \\
(ii)$\Rightarrow$(iii): It is well known that $\Spec(\mathcal{P}(S))$ is the Stone-\v{C}ech compactification of the discrete space $S$. Therefore by \cite[Theorem 3.58]{Hindman-Strauss} or by \cite[Theorem on p.71]{Walker}, $|\Spec(\mathcal{P}(S))|=2^{2^{|S|}}$. It is clear that the set of tame primes of $\mathcal{P}(S)$ has the cardinality $|S|$. Then by Cantor's theorem (which asserts that $\alpha<2^{\alpha}$ for any cardinal $\alpha$), we have $|S|<2^{|S|}<2^{2^{|S|}}$. Next by using Remark \ref{Remark ii}, we obtain that the set of wild primes of $\mathcal{P}(S)$ has the cardinality $2^{2^{|S|}}$. Then by using Theorem \ref{Theorem T-D}, the desired conclusion is deduced.
\end{proof}

\begin{example} For $R=\prod\limits_{k\in S}R_{k}$ the map $\Spec(R)\rightarrow\Spec(\mathcal{P}(S))$ given by $\mathfrak{p}\mapsto M_{\mathfrak{p}}=\{A\in\mathcal{P}(S): \omega_{A}\in\mathfrak{p}\}$ is continuous and preserves wild primes where $\omega_{A}:=(r_{k})\in R$ with $r_{k}=1$ for $k\in A$ and otherwise $r_{k}=0$. But this map is not necessarily injective, because (for finite $S$ the assertion is clear) if $S$ is infinite and the map is injective then by Corollary \ref{Corollary 2 card}, the cardinality of $\Spec(R)$ will be $2^{2^{|S|}}$ which is impossible, because the cardinality of $\Spec(R)$ is strictly greater than the cardinality of each $\Spec(R_{k})$ and by \cite[Lemma 4.1]{Tarizadeh-Sharma} we may find a (Boolean) ring $R_{k}$ for some $k$ such that the cardinality of $\Spec(R_{k})>2^{2^{|S|}}$. Hence, this map is not necessarily injective. It is also a left inverse of the map in Theorem \ref{Theorem T-D}.
\end{example}

Note that if $\phi:A\rightarrow B$ is a surjective ring map and
$\phi^{-1}(E)$ is a (prime) ideal of $A$ for some subset $E$ of $B$, then $E$ is a (prime) ideal of $B$.

By a minimal tame prime of $R=\prod\limits_{k\in S}R_{k}$ we mean a tame prime of $R$ which is also a minimal prime of $R$. It can be easily seen that the minimal tame primes of $R$ are precisely of the form $\pi^{-1}_{k}(\mathfrak{p})$ where $\mathfrak{p}$ is a minimal prime of $R_{k}$ for some $k$. By a minimal wild prime of $R$ we mean a wild prime of $R$ which is also a minimal prime of $R$. The maximal tame prime and maximal wild prime notions are defined similarly. It can be seen that $P$ is a minimal tame prime of $R$ if and only if $P$ is a minimal element in the set of tame primes of $R$. Similar equivalences hold for minimal wild prime, maximal tame prime and maximal wild prime. It can be also seen that the maximal tame primes of $R$ are precisely of the form $\pi^{-1}_{k}(\mathfrak{m})$ where $\mathfrak{m}$ is a maximal ideal of $R_{k}$ for some $k$.

\begin{theorem}\label{Theorem 9 nine-dokuz} If the index set $S$ is infinite and each $R_{k}$ is an integral domain, then the set of minimal wild primes of $R=\prod\limits_{k\in S}R_{k}$ has the cardinality $2^{2^{|S|}}$.
\end{theorem}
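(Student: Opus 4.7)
The plan is to establish a bijection between the set of minimal primes of $R$ and the set of ultrafilters on $S$ under which minimal tame primes correspond to principal ultrafilters and minimal wild primes correspond to non-principal ones. Once this bijection is in hand, the cardinality statement follows at once from $|\Spec(\mathcal{P}(S))| = 2^{2^{|S|}}$ (the input already invoked in the proof of Corollary~\ref{Corollary 2 card}) together with Cantor's theorem and Remark~\ref{Remark ii}: since there are only $|S|$ principal ultrafilters and $|S| < 2^{2^{|S|}}$, the non-principal ones must still number $2^{2^{|S|}}$.

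To a minimal prime $P$ of $R$ I would assign $\mathcal{U}_P := \{A \subseteq S : \omega_A \notin P\}$. The identities $\omega_A \omega_B = \omega_{A \cap B}$, $\omega_A + \omega_{S \setminus A} = 1$, $\omega_A \omega_{S \setminus A} = 0$, $\omega_S = 1$, $\omega_\emptyset = 0$, combined with primality of $P$, show directly that $\mathcal{U}_P$ is an ultrafilter on $S$ (minimality is not needed for this step). In the opposite direction, to an ultrafilter $\mathcal{U}$ I would assign $\mathfrak{p}_\mathcal{U} := \{a \in R : \{i \in S : a_i = 0\} \in \mathcal{U}\}$; this is precisely the prime $M^*$ produced by Theorem~\ref{Theorem T-D} when one picks $M = \mathcal{P}(S)\setminus\mathcal{U}$ and uses the zero primes $\mathfrak{p}_i = (0) \subset R_i$ available from the domain hypothesis, so $a^* = \{i : a_i = 0\}$ here.

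The main obstacle is verifying that these two assignments are mutually inverse. The direction $\mathcal{U}_{\mathfrak{p}_\mathcal{U}} = \mathcal{U}$ is a one-line unwinding of the definitions. The harder direction is $P = \mathfrak{p}_{\mathcal{U}_P}$ for an arbitrary minimal prime $P$, and this is the only place where the minimality of $P$ and the integral domain hypothesis are both used in an essential way. For $P \subseteq \mathfrak{p}_{\mathcal{U}_P}$: given $a \in P$, the reduced ring $R$ has $R_P$ a field (since $P$ is minimal), so there exists $s \notin P$ with $sa = 0$; since each $R_i$ is a domain, the sets $A := \{i : a_i = 0\}$ and $B := \{i : s_i = 0\}$ cover $S$, and from $s \omega_B = 0 \in P$ and $s \notin P$ primality yields $\omega_B \in P$, i.e., $B \notin \mathcal{U}_P$, hence $S \setminus B \in \mathcal{U}_P$ by the ultrafilter property, and $S \setminus B \subseteq A$ gives $A \in \mathcal{U}_P$, so $a \in \mathfrak{p}_{\mathcal{U}_P}$. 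For $\mathfrak{p}_{\mathcal{U}_P} \subseteq P$: if $A := \{i : a_i = 0\} \in \mathcal{U}_P$ then $\omega_A \notin P$ while $a \omega_A = 0 \in P$, so $a \in P$. A closely parallel short argument shows that every $\mathfrak{p}_\mathcal{U}$ is itself minimal (a prime strictly below $\mathfrak{p}_\mathcal{U}$ would be forced to contain some $\omega_A$ with $A \in \mathcal{U}$, contradicting $\omega_A \notin \mathfrak{p}_\mathcal{U}$), completing the bijection.

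Finally, since each $R_k$ is a domain, the minimal tame primes of $R$ are exactly the ideals $\pi_k^{-1}(0)$, and an immediate check shows $\mathcal{U}_{\pi_k^{-1}(0)}$ is the principal ultrafilter at $k$ (because $\omega_A \notin \pi_k^{-1}(0)$ iff $k \in A$). Hence the bijection restricts to one between minimal wild primes of $R$ and non-principal ultrafilters on $S$, yielding the claimed cardinality $2^{2^{|S|}}$.
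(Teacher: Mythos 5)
Your proof is correct, and it reaches the same counting scheme as the paper (total minimal primes minus the $|S|$ tame ones, finished off by Cantor's theorem and Remark \ref{Remark ii}), but it gets the key input by a genuinely different, self-contained route. The paper simply cites \cite[Theorem 3.5]{Tarizadeh} for the fact that $\Min(R)$ is the Stone--\v{C}ech compactification of the discrete space $S$ and then reads off $|\Min(R)|=2^{2^{|S|}}$; you instead prove the underlying point-set statement directly, namely the bijection $P\mapsto \mathcal{U}_P=\{A\subseteq S:\omega_A\notin P\}$ between $\Min(R)$ and the ultrafilters on $S$, with inverse $\mathcal{U}\mapsto\mathfrak{p}_{\mathcal{U}}$. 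All the steps check out: the ultrafilter axioms for $\mathcal{U}_P$ follow from $\omega_A\omega_B=\omega_{A\cap B}$, $\omega_A+\omega_{S\setminus A}=1$ and primality; the inclusion $P\subseteq\mathfrak{p}_{\mathcal{U}_P}$ correctly isolates where minimality and the domain hypothesis enter (via $a/1=0$ in the field $R_P$, so $sa=0$ with $s\notin P$, and the supports of $a$ and $s$ covering $S$); and the identification of $\mathcal{U}_{\pi_k^{-1}(0)}$ with the principal ultrafilter at $k$ matches the paper's observation that minimal tame primes are exactly the $\pi_k^{-1}(\mathfrak{p})$ with $\mathfrak{p}$ minimal in $R_k$. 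What your version buys is independence from the external reference and an explicit description of every minimal wild prime as some $\mathfrak{p}_{\mathcal{U}}$ for $\mathcal{U}$ non-principal, which sharpens Theorem \ref{Theorem T-D} in the domain case (there the map $M\mapsto M^{\ast}$ is only an imbedding, and Example \ref{Remarh i} shows it need not hit all wild primes in general); what the paper's citation buys in exchange is the additional topological content (the homeomorphism with the Stone--\v{C}ech compactification, not just a bijection), which is not needed for the cardinality claim.
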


\begin{proof} It can be shown that $\Min(R)$, the space of minimal prime ideals of $R$, is the Stone-\v{C}ech compactification of the discrete space $S$. Therefore by \cite[Theorem 3.58]{Hindman-Strauss} or by \cite[Theorem on p.71]{Walker}, $\Min(R)$ has the cardinality $2^{2^{|S|}}$. It is clear that the set of minimal tame primes of $R$ has the cardinality $|S|$. Then by using Cantor's Theorem and Remark \ref{Remark ii}, we get that the set of minimal wild primes of $R=\prod\limits_{k\in S}R_{k}$ has the cardinality $2^{2^{|S|}}$.
\end{proof}

\begin{theorem}\label{Theorem 10 ten} If the index set $S$ is infinite and each $R_{k}$ is a local ring, then the set of maximal wild primes of $R=\prod\limits_{k\in S}R_{k}$ has the cardinality $2^{2^{|S|}}$.
\end{theorem}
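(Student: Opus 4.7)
The plan is to reduce the problem to a cardinality computation for the maximal spectrum of the product of residue fields $K_{k} := R_{k}/\mathfrak{m}_{k}$, and to identify that spectrum with $\Spec(\mathcal{P}(S))$ via the Boolean algebra of idempotents. Subtracting off the (very few) tame maximal ideals then yields the count of maximal wild primes.

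The first step is to show that the Jacobson radical of $R$ coincides with $J := \prod_{k\in S}\mathfrak{m}_{k}$, where $\mathfrak{m}_{k}$ is the unique maximal ideal of $R_{k}$. The inclusion $J(R) \subseteq J$ is already visible from the tame maximal ideals, since $\bigcap_{k\in S}\pi_{k}^{-1}(\mathfrak{m}_{k}) = J$. For the reverse, I would check that every element of the form $1+x$ with $x\in J$ is a unit of $R$: units in a product ring are precisely componentwise units, and $1+x_{k}$ is a unit of the local ring $R_{k}$ whenever $x_{k}\in\mathfrak{m}_{k}$. Consequently $R/J(R) \cong \prod_{k\in S} K_{k}$, which is a von Neumann regular ring.

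The second step is to invoke the standard fact that in a von Neumann regular ring every prime is maximal and the prime spectrum is canonically homeomorphic to the Stone space of its Boolean algebra of idempotents. For $\prod_{k} K_{k}$ the idempotents are precisely the $\{0,1\}$-valued tuples, which form a Boolean ring isomorphic to $\mathcal{P}(S)$. Combining this with the canonical bijection between maximal ideals of $R$ and maximal ideals of $R/J(R)$ gives
\[
|\Max(R)| \;=\; |\Max(\prod_{k} K_{k})| \;=\; |\Spec(\prod_{k} K_{k})| \;=\; |\Spec(\mathcal{P}(S))| \;=\; 2^{2^{|S|}},
\]
where the last equality is the Stone-\v{C}ech fact already cited in the proof of Theorem \ref{Theorem 9 nine-dokuz}.

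Finally I would read off the wild count: the tame maximal ideals of $R$ are exactly the $\pi_{k}^{-1}(\mathfrak{m}_{k})$ for $k\in S$ (each $R_{k}$ being local), contributing only $|S|$ elements, and by Cantor's theorem together with Remark \ref{Remark ii} we have $|S| < 2^{2^{|S|}}$; hence the set of maximal wild primes has cardinality $2^{2^{|S|}}$. The main obstacle, as I see it, is not the combinatorics (which parrots the proof of Theorem \ref{Theorem 9 nine-dokuz}) but the initial algebraic reduction: recognizing that the local hypothesis is exactly what forces $R/J(R)$ to be the product of residue fields, and hence von Neumann regular, so that the Boolean algebra of idempotents $\mathcal{P}(S)$ governs the entire maximal spectrum.
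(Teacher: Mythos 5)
Your proof is correct and follows the same overall skeleton as the paper's: establish $|\Max(R)|=2^{2^{|S|}}$, note that the tame maximal ideals are exactly the $\pi_{k}^{-1}(\mathfrak{m}_{k})$ and hence number only $|S|$, and finish with Cantor's theorem and Remark \ref{Remark ii}. The difference is in how the cardinality of $\Max(R)$ is obtained. The paper outsources this entirely, citing \cite[Theorem 5.4]{Tarizadeh} for the statement that $\Max(R)$ is the Stone--\v{C}ech compactification of the discrete space $S$ and then quoting the known cardinality of that compactification. You instead prove the needed cardinality from scratch: $\mathfrak{J}(R)=\prod_{k\in S}\mathfrak{m}_{k}$ (your componentwise-unit argument is exactly right, and the reverse inclusion indeed comes from the tame maximal ideals), so $R/\mathfrak{J}(R)\simeq\prod_{k\in S}K_{k}$ is von Neumann regular with Boolean ring of idempotents $\mathcal{P}(S)$, whence $\Max(R)=\Max(R/\mathfrak{J}(R))=\Spec\bigl(\prod_{k\in S}K_{k}\bigr)$ is in bijection with $\Spec(\mathcal{P}(S))$. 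This is essentially the content of the theorem the paper cites, so your argument is a self-contained version of the same route; what it buys is independence from the external reference, at the cost of invoking the standard identification of $\Spec(A)$ with $\Spec(\mathcal{B}(A))$ for von Neumann regular $A$ --- a fact that is available within the paper's toolkit via \cite[Theorem 4.1]{Tarizadeh-Taheri}, since for such $A$ every prime is maximal and the connected components of $\Spec(A)$ are singletons. No step is missing.
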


\begin{proof} It can be shown that $\Max(R)$, the space of maximal ideals of $R$, is the Stone-\v{C}ech compactification of the discrete space $S$. Then the remainder of the argument is exactly as the above proof, with taking into account that the set of maximal tame primes of $R$ has the cardinality $|S|$.
\end{proof}

Our next goal is to study the nilpotent elements of an infinite direct product ring and its quotient ring modulo the direct sum ideal. First note that the Jacobson radical is well behaved with the direct products. More precisely, for the ring $R=\prod\limits_{k\in S}R_{k}$ we have
$\mathfrak{J}(R)=\prod\limits_{k\in S}\mathfrak{J}(R_{k})$. But the nil-radical, unlike the Jacobson radical, is not well behaved with the direct products. In fact, $\mathfrak{N}(R)\subseteq\prod\limits_{k\in S}\mathfrak{N}(R_{k})$. Also $\bigoplus\limits_{k\in S}\mathfrak{N}(R_{k})\subseteq\mathfrak{N}(R)$. These inclusions can be strict. For example, in the ring $\prod\limits_{n\geqslant1}\mathbb{Z}/p^{n}\mathbb{Z}$ with $p$ a prime number, the sequence $(p+p^{n}\mathbb{Z})_{n\geqslant1}$ is a member of $\prod
\limits_{n\geqslant1}\mathfrak{N}(\mathbb{Z}/p^{n}\mathbb{Z})$ but this element is not nilpotent. To see the strictness of the second inclusion, consider the element $b=(b_{n})$ with $b_{1}=0$ and $b_{n}=p^{n-1}+p^{n}\mathbb{Z}$ for all $n\geqslant2$, then $b^{2}=0$ but $b\notin\bigoplus\limits_{n\geqslant1}
\mathfrak{N}(\mathbb{Z}/p^{n}\mathbb{Z})$.

For each $k\in S$, let $\mathfrak{p}_{k}$ be a fixed prime ideal of a ring $R_{k}$ and let $T$ be the set of all $r=(r_{k})$ in $R=\prod\limits_{k\in S}R_{k}$ such that the set $\Omega(r)=\{k\in S: r_{k}\notin\mathfrak{p}_{k}\}$ is cofinite (i.e., its complement $\Omega(r)^{c}=S\setminus\Omega(r)$ is finite). It is clear that $\Omega(ab)=\Omega(a)\cap\Omega(b)$ for all $a,b\in R$. In particular, $T$ is a multiplicative subset of $R$. For each $k\in S$, $1-e_{k}\in T$. So if $\mathfrak{p}$ is a prime ideal of $R$ with $\mathfrak{p}\cap T=\emptyset$, then $\mathfrak{p}$ is a wild prime of $R$.

In the following results, by $U$ we mean the set of all $r=(r_{k})$ in
$R=\prod\limits_{k\in S}R_{k}$ such that its support $S(r)=\{k\in S: r_{k}\neq0\}$ is cofinite.

\begin{lemma}\label{Theorem 6 imbed} If each $R_{k}$ is an integral domain, then the ring $R=\prod\limits_{k\in S}R_{k}$ modulo $\bigoplus\limits_{k\in S}R_{k}$ can be canonically imbedded in $U^{-1}R$.
\end{lemma}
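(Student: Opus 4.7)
The plan is to show that the kernel of the canonical localization map $\iota\colon R\to U^{-1}R$ is exactly the direct sum ideal $\bigoplus_{k\in S}R_{k}$, so that $\iota$ factors through an injection of $R/\bigoplus_{k\in S}R_{k}$ into $U^{-1}R$.

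First I would verify that $U$ is genuinely a multiplicative subset of $R$, which is where the integral domain hypothesis enters. Indeed, $1\in U$ since $S(1)=S$, and for $a,b\in R$ the set $S(ab)$ equals $S(a)\cap S(b)$ precisely because each $R_{k}$ has no zero divisors: $a_{k}b_{k}\neq 0$ iff both $a_{k}\neq 0$ and $b_{k}\neq 0$. The intersection of two cofinite subsets of $S$ is cofinite, so $ab\in U$.

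Next I would compute $\Ker(\iota)=\{r\in R:\exists u\in U,\ ur=0\}$ and show it equals $\bigoplus_{k\in S}R_{k}$. For the inclusion $\bigoplus_{k\in S}R_{k}\subseteq\Ker(\iota)$, take $r=(r_{k})$ with finite support $A=S(r)$, and set $u:=1-\sum_{k\in A}e_{k}$. Then $u\in U$ because $S(u)=S\setminus A$ is cofinite, and $ur=0$ componentwise since for $k\in A$ we have $u_{k}=0$ while for $k\notin A$ we have $r_{k}=0$. For the reverse inclusion $\Ker(\iota)\subseteq\bigoplus_{k\in S}R_{k}$, suppose $ur=0$ for some $u\in U$. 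Componentwise $u_{k}r_{k}=0$ in the domain $R_{k}$, so whenever $u_{k}\neq 0$ we must have $r_{k}=0$. Since $u\in U$ the set $\{k:u_{k}=0\}$ is finite, hence $S(r)\subseteq\{k:u_{k}=0\}$ is finite, meaning $r\in\bigoplus_{k\in S}R_{k}$.

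Once both inclusions are established, $\iota$ descends through the quotient and induces an injective ring homomorphism $R/\bigoplus_{k\in S}R_{k}\hookrightarrow U^{-1}R$, which is the desired canonical embedding. The only place where any real obstacle could arise is the reverse inclusion in the kernel computation, and the integral domain assumption on each $R_{k}$ is exactly what makes this step work; without it, zero divisors in individual factors would allow a nonzero $r_{k}$ to be killed by a nonzero $u_{k}$, breaking the argument.
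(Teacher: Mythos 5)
Your proposal is correct and follows essentially the same route as the paper's proof: check that $U$ is multiplicatively closed (via $S(ab)=S(a)\cap S(b)$ in a product of domains), identify $\Ker(\iota)$ with the direct sum ideal by the same two inclusions with the same choice of annihilating element $u=1-\sum_{k\in S(r)}e_{k}$, and pass to the quotient. The only difference is cosmetic: you make explicit the use of the domain hypothesis in the inclusion $\Ker(\iota)\subseteq\bigoplus_{k\in S}R_{k}$, which the paper leaves implicit.
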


\begin{proof} We observed that $U$ is a multiplicative subset of $R$. We show that $\Ker\pi=\bigoplus\limits_{k\in S}R_{k}$ where $\pi:R\rightarrow U^{-1}R$ is the canonical ring map. If  $a\in\Ker\pi$ then $ab=0$ for some $b\in U$. This yields that $S(a)\subseteq S(b)^{c}$ which is finite. Thus $a\in I:=\bigoplus\limits_{k\in S}R_{k}$. To see the reverse inclusion, take
$a\in I$. Then consider the element $b=(b_{k})\in R$ such that $b_{k}$ is either $0$ or $1$, according to whether $k\in S(a)$ or $k\notin S(a)$. Then clearly $ab=0$, and $b\in U$ because $S(b)^{c}=S(a)$ is finite. So we obtain an injective morphism of rings $\phi:R/I\rightarrow U^{-1}R$ given by $a+I\mapsto a/1$.
\end{proof}

\begin{remark}\label{Remark iv lying over} Remember that if $\phi:A\rightarrow B$ is an injective ring map and $\mathfrak{p}$ is a minimal prime ideal of $A$, then there exists a prime ideal $\mathfrak{q}$ of $B$ which laying over $\mathfrak{p}$, i.e., $\mathfrak{p}=\phi^{-1}(\mathfrak{q})$. Indeed, consider the following commutative (pushout) diagram: $$\xymatrix{
A\ar[r]^{\phi}\ar[d]^{\pi} &B\ar[d]^{}\\A_{\mathfrak{p}}\ar[r]^{}&
A_{\mathfrak{p}}\otimes_{A}B}$$ where $\pi$ and the unnamed arrows are the canonical maps. Clearly $A_{\mathfrak{p}}\otimes_{A}B\simeq B_{\mathfrak{p}}$ is a nonzero ring, since $\phi$ is injective. So it has a prime ideal $P$. The contraction of $P$ under the canonical ring map $A_{\mathfrak{p}}\rightarrow A_{\mathfrak{p}}\otimes_{A}B$ equals $\mathfrak{p}A_{\mathfrak{p}}$, because $\mathfrak{p}$ is a minimal prime of $A$. Let $\mathfrak{q}$ be the contraction of $P$ under the canonical ring map $B\rightarrow A_{\mathfrak{p}}\otimes_{A}B$ which is a prime ideal of $B$. Now we have $\phi^{-1}(\mathfrak{q})=
\pi^{-1}(\mathfrak{p}A_{\mathfrak{p}})=\mathfrak{p}$. Then using the axiom of choice, we obtain an injective map $\Min(A)\rightarrow\Spec(B)$.
\end{remark}

\begin{corollary} If each $R_{k}$ is an integral domain and $\mathfrak{p}$ is a minimal wild prime of $R=\prod\limits_{k\in S}R_{k}$, then $\mathfrak{p}\cap U=\emptyset$.
\end{corollary}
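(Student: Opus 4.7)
The plan is to combine Lemma \ref{Theorem 6 imbed} with the lying-over observation in Remark \ref{Remark iv lying over}. The key idea is that the localization map $R \to U^{-1}R$ factors through $R/I$, where $I = \bigoplus_{k\in S}R_k$, and that a minimal wild prime descends to a minimal prime of $R/I$.

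First, I would observe that by Corollary \ref{Corollary I} the wild prime $\mathfrak{p}$ contains $I$, so $\overline{\mathfrak{p}} := \mathfrak{p}/I$ is a prime ideal of $R/I$. The standard order-preserving correspondence between primes of $R$ containing $I$ and primes of $R/I$, combined with Corollary \ref{Corollary I} (which identifies primes containing $I$ with the wild primes), shows that $\overline{\mathfrak{p}}$ is a minimal prime of $R/I$ exactly when $\mathfrak{p}$ is minimal among the wild primes of $R$. Since $\mathfrak{p}$ is a minimal prime of $R$ that happens to be wild, it is certainly minimal among wild primes, so $\overline{\mathfrak{p}} \in \Min(R/I)$.

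Next, by Lemma \ref{Theorem 6 imbed}, the canonical map $\phi:R/I \hookrightarrow U^{-1}R$ given by $a+I\mapsto a/1$ is injective. Applying Remark \ref{Remark iv lying over} to $\phi$ and the minimal prime $\overline{\mathfrak{p}}$ of $R/I$, we obtain a prime ideal $\mathfrak{q}$ of $U^{-1}R$ lying over $\overline{\mathfrak{p}}$, i.e.\ $\phi^{-1}(\mathfrak{q}) = \overline{\mathfrak{p}}$.

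Finally, I would contract $\mathfrak{q}$ all the way down to $R$. Let $\pi:R \to R/I$ be the quotient map and $\iota:R \to U^{-1}R$ the localization map; then $\iota = \phi\circ\pi$. Setting $\mathfrak{p}' := \iota^{-1}(\mathfrak{q})$, we have
\[
\mathfrak{p}' = \pi^{-1}\bigl(\phi^{-1}(\mathfrak{q})\bigr) = \pi^{-1}(\overline{\mathfrak{p}}) = \mathfrak{p}.
\]
Since $\mathfrak{p}'$ is the contraction of a prime of $U^{-1}R$ under the localization map, the standard correspondence for localizations gives $\mathfrak{p}' \cap U = \emptyset$, i.e.\ $\mathfrak{p}\cap U = \emptyset$, as required.

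The only mildly subtle point is step two, verifying that $\overline{\mathfrak{p}}$ is a minimal prime of $R/I$ (and not merely a prime). The remark before the statement of the corollary makes this transparent: if a prime $Q \subseteq \mathfrak{p}$ of $R$ were tame, then $Q+\mathfrak{p} = R$, contradicting $Q \subseteq \mathfrak{p}$; hence every prime below $\mathfrak{p}$ is wild, so minimality of $\mathfrak{p}$ in $\Spec(R)$ and minimality of $\overline{\mathfrak{p}}$ in $\Spec(R/I)$ coincide. Everything else is a direct assembly of the lemma and remark.
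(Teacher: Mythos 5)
Your proof is correct and follows essentially the same route as the paper's own (first) argument: pass to the minimal prime $\mathfrak{p}/I$ of $R/I$, apply the lying-over observation of Remark \ref{Remark iv lying over} to the imbedding of Lemma \ref{Theorem 6 imbed}, and contract back along $R\to R/I\to U^{-1}R$. The paper also records a second, more elementary proof (using that every element of a minimal prime of a reduced ring is annihilated by an element outside it), but your assembly of the lemma and remark matches the intended one.
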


\begin{proof} By Lemma \ref{Theorem 6 imbed}, we have an injective ring map $\phi:R/I\rightarrow U^{-1}R$ where $I=\bigoplus\limits_{k\in S}R_{k}$. Clearly $\mathfrak{p}/I$ is a minimal prime ideal of $R/I$. So by Remark \ref{Remark iv lying over}, there exists a prime ideal $\mathfrak{q}$ of $R$ such that $\mathfrak{q}\cap U=\emptyset$ and $\mathfrak{p}/I=\phi^{-1}(U^{-1}\mathfrak{q})$. But the composition of $\phi$ with the canonical map $R\rightarrow R/I$ gives us the canonical map $R\rightarrow U^{-1}R$. It follows that $\mathfrak{p}=\mathfrak{q}$. \\
As a second proof, suppose $a\in\mathfrak{p}\cap U$. It is well known that $ab$ is nilpotent for some $b\in R\setminus\mathfrak{p}$. It follows that $ab=0$, because $R$ is a reduced ring. Thus $S(b)\subseteq S\setminus S(a)$ which is finite. This shows that $b\in I\subseteq\mathfrak{p}$ which is a contradiction.
\end{proof}

In the following two results, recall that $T$ denotes the set of all $r=(r_{k})$ in $R=\prod\limits_{k\in S}R_{k}$ such that
$\Omega(r)=\{k\in S: r_{k}\notin\mathfrak{m}_{k}\}$ is cofinite.

\begin{theorem}\label{Theorem 8 eight} If each $R_{k}$ is a local ring with the maximal ideal $\mathfrak{m}_{k}$, then the ring $R=\prod\limits_{k\in S}R_{k}$ modulo $\bigoplus\limits_{k\in S}R_{k}$ is canonically isomorphic to $T^{-1}R$.
\end{theorem}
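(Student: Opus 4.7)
The plan is to analyze the canonical ring map $\pi:R\to T^{-1}R$, $r\mapsto r/1$, by showing two things: $\Ker(\pi)=I$ and $\pi$ is surjective. The desired canonical isomorphism $R/I\simeq T^{-1}R$ will then follow from the first isomorphism theorem. The essential role of the locality hypothesis is the equivalence ``$t_{k}\notin\mathfrak{m}_{k}\Leftrightarrow t_{k}\in R_{k}^{\times}$''; this makes $\Omega(t)$ the set of coordinates on which $t$ is invertible, and without it neither of the steps below goes through.

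For the kernel computation, suppose first that $r\in I$, so $S(r)$ is finite. Put $t:=1-\sum_{k\in S(r)}e_{k}\in R$; then $\Omega(t)=S\setminus S(r)$ is cofinite, whence $t\in T$, and clearly $tr=0$. Hence $r\in\Ker(\pi)$. Conversely, if $\pi(r)=0$ then $tr=0$ for some $t\in T$. For every $k\in\Omega(t)$ the component $t_{k}$ is a unit in the local ring $R_{k}$, so $r_{k}=0$. Thus $S(r)\subseteq\Omega(t)^{c}$ is finite and $r\in I$.

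For surjectivity, fix an arbitrary fraction $a/t\in T^{-1}R$ and set $F:=\Omega(t)^{c}$, which is finite. Using locality, $t_{k}$ is a unit in $R_{k}$ for each $k\notin F$, so the element $r=(r_{k})\in R$ defined by $r_{k}=a_{k}t_{k}^{-1}$ for $k\notin F$ and $r_{k}=0$ for $k\in F$ is legitimately in $R$. Let $s:=1-\sum_{k\in F}e_{k}$, which belongs to $T$ because $\Omega(s)=S\setminus F$ is cofinite. Then $s(rt-a)$ vanishes coordinatewise: on $F$ because $s_{k}=0$, and on $S\setminus F$ because $r_{k}t_{k}=a_{k}$. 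Hence $r/1=a/t$ in $T^{-1}R$, showing $\pi$ is surjective.

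No serious obstacle is anticipated; the only point that requires care is bookkeeping the cofinite/finite complementary sets $\Omega(t)$ and $F$, and invoking the local hypothesis at the exact moments when ``not in $\mathfrak{m}_{k}$'' must be upgraded to ``unit'' (once to kill coordinates outside a finite set in the kernel argument, and once to invert $t_{k}$ coordinatewise in the surjectivity argument).
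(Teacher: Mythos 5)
Your proposal is correct and follows essentially the same route as the paper: both compute $\Ker(\pi)=\bigoplus_{k\in S}R_{k}$ by exploiting that ``not in $\mathfrak{m}_{k}$'' means ``unit'' in a local ring, and both obtain surjectivity by inverting $t$ coordinatewise on the cofinite set $\Omega(t)$ (your $r$ is exactly the paper's $ab'$). The only cosmetic difference is that the paper phrases surjectivity as ``every element of $T$ becomes invertible in $R/I$'' before concluding, whereas you verify $r/1=a/t$ directly.
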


\begin{proof} We first show that $\Ker\pi=\bigoplus\limits_{k\in S}R_{k}$ where $\pi:R\rightarrow T^{-1}R$ is the canonical ring map. If $a\in\Ker\pi$ then $ab=0$ for some $b\in T$. This yields that $S(a)=\{k\in S: a_{k}\neq0\}\subseteq
\Omega(b)^{c}$ which is finite. So $a\in I:=\bigoplus\limits_{k\in S}R_{k}$. To see the reverse inclusion, take
$a\in I$. Then consider the element $b=(b_{k})\in R$ such that $b_{k}$ is either $0$ or $1$, according to whether $k\in S(a)$ or $k\notin S(a)$. Then clearly $ab=0$, and $b\in T$ because $\Omega(b)^{c}=S(a)$ is finite. Thus we obtain an injective morphism of rings $\phi:R/I\rightarrow T^{-1}R$ given by $a+I\mapsto a/1$. The image of each $b=(b_{k})\in T$ under the canonical ring map $R\rightarrow R/I$ is invertible, because consider the element $b'=(b'_{k})\in R$ where $b'_{k}:=b^{-1}_{k}$ if $k\in\Omega(b)$ and otherwise $b'_{k}:=0$, then $1-bb'=\sum\limits_{k\in\Omega(b)^{c}}e_{k}\in I=\Ker\pi$. Thus $c(1-bb')=0$ for some $c\in T$. It follows that $\phi(ab')=a/b$.
Hence, $\phi$ is surjective.
\end{proof}

\begin{corollary} If each $(R_{k},\mathfrak{m}_{k})$ is a local ring, then a prime ideal $\mathfrak{p}$ of $R=\prod\limits_{k\in S}R_{k}$ is a wild prime if and only if $\mathfrak{p}\cap T=\emptyset$.
\end{corollary}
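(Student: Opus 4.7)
The plan is to reduce the statement to Corollary \ref{Corollary I}, which characterizes wild primes as those containing the direct sum ideal $I=\bigoplus_{k\in S}R_{k}$, and then exploit that the unit idempotents $e_k$ relate to $T$ in a very transparent way once each factor is local.

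For the converse direction ($\mathfrak{p}\cap T=\emptyset$ implies $\mathfrak{p}$ wild), the argument is essentially automatic. For any $k\in S$, the element $1-e_k$ lies in $T$ because $\Omega(1-e_k)=S\setminus\{k\}$, which is cofinite (the value $1$ is never in $\mathfrak{m}_k$). So $1-e_k\notin\mathfrak{p}$ by hypothesis, while $e_k(1-e_k)=0\in\mathfrak{p}$; primality forces $e_k\in\mathfrak{p}$. Since $I$ is generated by the $e_k$, this gives $I\subseteq\mathfrak{p}$, and Corollary \ref{Corollary I} yields that $\mathfrak{p}$ is wild.

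For the forward direction (wild implies $\mathfrak{p}\cap T=\emptyset$), suppose for contradiction that some $a=(a_k)$ lies in $\mathfrak{p}\cap T$, so $\Omega(a)$ is cofinite. Here the local-ring hypothesis enters: for each $k\in\Omega(a)$, the component $a_k$ is a unit of $R_k$, so we may form the element $a'=(a'_k)\in R$ with $a'_k:=a_k^{-1}$ for $k\in\Omega(a)$ and $a'_k:=0$ otherwise. A direct computation gives $1-aa'=\sum_{k\in\Omega(a)^{c}}e_k$, which is a \emph{finite} sum of generators of $I$ (cofiniteness of $\Omega(a)$ is exactly what makes this sum finite), hence lies in $I\subseteq\mathfrak{p}$. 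Combined with $aa'\in\mathfrak{p}$, this yields $1\in\mathfrak{p}$, the desired contradiction.

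Alternatively, one can deduce both implications uniformly from Theorem \ref{Theorem 8 eight}: the canonical map $R\to T^{-1}R$ factors through $R/I$ via the isomorphism of that theorem, so primes of $R$ disjoint from $T$ correspond bijectively (under contraction) to primes of $R/I$, which in turn correspond to primes of $R$ containing $I$, i.e., wild primes by Corollary \ref{Corollary I}. The only mildly delicate step is the local-inversion trick in the forward direction, and this is really the whole role of the local-ring assumption.
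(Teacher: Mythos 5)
Your proof is correct and takes essentially the same route as the paper: the reverse implication is the general observation (already recorded just before Lemma \ref{Theorem 6 imbed}) that $1-e_{k}\in T$ forces a prime disjoint from $T$ to contain every $e_{k}$ and hence the direct sum ideal, while your forward direction merely inlines the inversion trick $a'_{k}=a_{k}^{-1}$ from the proof of Theorem \ref{Theorem 8 eight}, which the paper instead invokes wholesale. Your alternative paragraph via the isomorphism $R/I\simeq T^{-1}R$ is precisely the paper's one-line argument.
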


\begin{proof}  The implication ``$\Rightarrow$'' is an immediate consequence of the above result. The reverse implication holds more generally.
\end{proof}

The following result also immediately follows from the above theorem.

\begin{corollary} If each $R_{k}$ is a field, then the ring $R=\prod\limits_{k\in S}R_{k}$ modulo $\bigoplus\limits_{k\in S}R_{k}$ is canonically isomorphic to $U^{-1}R$.
\end{corollary}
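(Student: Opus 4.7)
The plan is to observe that this corollary is a direct specialization of Theorem \ref{Theorem 8 eight}. A field is a local ring whose unique maximal ideal is the zero ideal, so when each $R_k$ is a field and $\mathfrak{m}_k = (0)$, I would first unpack the definition of the multiplicative set $T$ in this situation. For any $r = (r_k) \in R$, the condition $r_k \notin \mathfrak{m}_k = (0)$ is exactly the condition $r_k \neq 0$, so $\Omega(r) = \{k \in S : r_k \notin \mathfrak{m}_k\}$ coincides with the support $S(r) = \{k \in S : r_k \neq 0\}$.

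Therefore the two multiplicative subsets of $R$ agree on the nose: $T = \{r \in R : \Omega(r) \text{ is cofinite}\} = \{r \in R : S(r) \text{ is cofinite}\} = U$. Having identified these two sets, the conclusion follows immediately by plugging into Theorem \ref{Theorem 8 eight}, which yields the canonical isomorphism $R/\bigoplus_{k\in S} R_k \cong T^{-1}R = U^{-1}R$.

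There is essentially no obstacle here; the only thing to check is that the notations $\Omega$ and $S(\cdot)$ really do coincide when the maximal ideals are trivial, and this is immediate. One could alternatively give an independent proof by mimicking the argument of Theorem \ref{Theorem 8 eight} with $U$ in place of $T$ (using Lemma \ref{Theorem 6 imbed} for injectivity and then constructing an inverse by inverting each nonzero $b_k$ on the cofinite support), but since the equality $T = U$ holds on the level of sets, appealing directly to Theorem \ref{Theorem 8 eight} is the cleanest approach and matches the author's remark that the corollary follows ``immediately'' from the preceding theorem.
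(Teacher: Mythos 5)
Your proposal is correct and matches the paper's approach: the paper simply notes that the corollary ``immediately follows'' from Theorem \ref{Theorem 8 eight}, and your identification of $T$ with $U$ when each $\mathfrak{m}_k=(0)$ is exactly the (easy) verification that makes this specialization work.
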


The following result is well known (see \cite[Theorem 3.4]{Gilmer-Heinzer} and \cite[Proposition 2.6]{Maroscia}). We only give a new proof for the equivalence (i)$\Leftrightarrow$(ii).

\begin{theorem}\label{Theorem 2 Krull} If each $R_i$ is a zero-dimensional ring, then for $R=\prod\limits_{i\in S}R_{i}$ the following assertions are equivalent. \\
$\mathbf{(i)}$ $\dim(R)=0$. \\
$\mathbf{(ii)}$ $\mathfrak{N}(R)=\prod\limits_{i\in S}\mathfrak{N}(R_{i})$. \\
$\mathbf{(iii)}$ $\dim(R)$ is finite.
\end{theorem}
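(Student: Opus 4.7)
The paragraph preceding the theorem already singles out (i)$\Leftrightarrow$(ii) as the part needing a new argument, and (i)$\Rightarrow$(iii) is tautological, so I focus the plan on (i)$\Leftrightarrow$(ii) and treat (iii)$\Rightarrow$(i) as a literature citation.

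For (i)$\Rightarrow$(ii), the plan is to chain three identifications. If $\dim(R)=0$ then every prime of $R$ is maximal, so $\mathfrak{N}(R)$ coincides with $\mathfrak{J}(R)$. The paper has just observed that the Jacobson radical is well behaved with direct products: $\mathfrak{J}(R)=\prod\limits_{k\in S}\mathfrak{J}(R_{k})$. Since each $R_{k}$ is zero-dimensional, the same reasoning (every prime of $R_{k}$ is maximal) gives $\mathfrak{J}(R_{k})=\mathfrak{N}(R_{k})$. Composing the three equalities yields (ii).

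For (ii)$\Rightarrow$(i), the strategy is to show that every prime ideal $P$ of $R$ is maximal by producing an inverse of any $a=(a_{i})\notin P$ modulo $P$. The key ingredient is the standard decomposition available in any zero-dimensional ring: for each $a_{i}\in R_{i}$ there exists an idempotent $e_{i}\in R_{i}$ such that $a_{i}(1-e_{i})$ is nilpotent in $R_{i}$ while $a_{i}e_{i}$ is a unit in the summand $R_{i}e_{i}$. (This can be seen, for instance, from the fact that $\Spec(R_{i})$ is a Stone space so that $V(a_{i})$ is clopen, or equivalently by lifting the idempotent representative of $V(\overline{a_{i}})$ from the von Neumann regular quotient $R_{i}/\mathfrak{N}(R_{i})$ along the nil surjection.) Assembling these coordinate-wise produces an idempotent $e=(e_{i})\in R$ with $a(1-e)\in\prod\limits_{k\in S}\mathfrak{N}(R_{k})$ and $ae$ invertible in $Re$.

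Here is where (ii) does real work: it promotes the coordinate-wise nilpotent element $a(1-e)$ to a genuinely nilpotent element of $R$, which must therefore lie in $P$. Since $P$ is prime and $a\notin P$, it follows that $1-e\in P$, i.e., $e\equiv 1\pmod{P}$. Choosing an inverse $c\in Re$ of $ae$ (so $aec=e$), we get $ac\equiv e\equiv 1\pmod{P}$, which exhibits $a$ as a unit modulo $P$. Hence $R/P$ is a field, $P$ is maximal, and $\dim(R)=0$. The main obstacle is precisely the passage from coordinate-wise nilpotence to true nilpotence: without the uniform bound on nilpotence exponents that (ii) supplies (and that fails in examples like $\prod\limits_{n\geqslant 1}\mathbb{Z}/p^{n}\mathbb{Z}$, already discussed in the paper), the idempotent decomposition is still available in each factor but the assembled element $a(1-e)$ need not be nilpotent, and the argument collapses. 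Everything else is a routine assembly of well-known facts about zero-dimensional rings.
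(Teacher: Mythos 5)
Your proposal is correct, and the implications (i)$\Rightarrow$(ii), (i)$\Rightarrow$(iii), (iii)$\Rightarrow$(ii) are handled exactly as in the paper (the first by the chain $\mathfrak{N}(R)=\mathfrak{J}(R)=\prod_{i}\mathfrak{J}(R_{i})=\prod_{i}\mathfrak{N}(R_{i})$, the last by citation). For (ii)$\Rightarrow$(i), however, you take a genuinely different route. The paper's argument is a two-line reduction: since $\dim(R)=\dim(R/\mathfrak{N}(R))$ and hypothesis (ii) gives $R/\mathfrak{N}(R)\simeq\prod_{i}R_{i}/\mathfrak{N}(R_{i})$, one only needs to note that each factor is reduced zero-dimensional, hence von Neumann regular, that an arbitrary product of von Neumann regular rings is von Neumann regular, and that von Neumann regular rings are zero-dimensional. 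Your argument instead works directly at the level of a prime $P$ and an element $a\notin P$: you lift, in each zero-dimensional factor, the idempotent cutting out $D(a_{i})$ (idempotents lift modulo the nil ideal $\mathfrak{N}(R_{i})$), assemble these into $e\in R$ with $a(1-e)$ coordinate-wise nilpotent and $ae$ invertible in $Re$, and then use (ii) to conclude $a(1-e)\in\mathfrak{N}(R)\subseteq P$, whence $1-e\in P$ and $a$ is a unit modulo $P$. This is correct (the degenerate case $e_{i}=0$ when $a_{i}$ is nilpotent causes no trouble), and it is essentially the von Neumann regularity argument unwound by hand; what it buys is an explicit identification of where (ii) does its work, namely in promoting coordinate-wise nilpotence to genuine nilpotence, which is the same point the paper exploits, only packaged inside the isomorphism $R/\mathfrak{N}(R)\simeq\prod_{i}R_{i}/\mathfrak{N}(R_{i})$. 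The paper's version is shorter and avoids idempotent lifting; yours is more elementary in that it never invokes the stability of von Neumann regularity under products.
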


\begin{proof} (i)$\Rightarrow$(ii): By hypothesis, $\mathfrak{N}(R)=\mathfrak{J}(R)=\prod\limits_{i\in S}\mathfrak{J}(R_i)=\prod\limits_{i\in S}\mathfrak{N}(R_i)$. \\
(ii)$\Rightarrow$(i): For any ring $R$, we have $\dim(R)=\dim(R/\mathfrak{N}(R))$. By hypothesis, the ring $R/\mathfrak{N}(R)$ is canonically isomorphic to $R':=\prod\limits_{i\in S}R_i/\mathfrak{N}(R_i)$. But each $R_i/\mathfrak{N}(R_i)$ is a reduced zero-dimensional ring, and so it is a von-Neumann regular ring. It is easy to see that the every direct product of von-Neumann regular rings is von-Neumann regular, and every von-Neumann regular ring is zero-dimensional. Thus, $\dim(R)=\dim(R')=0$. \\
(i)$\Rightarrow$(iii): There is nothing to prove. \\
(iii)$\Rightarrow$(ii): See \cite[Theorem 3.4]{Gilmer-Heinzer}.
\end{proof}

\begin{corollary} If a ring $R$ has a principal maximal ideal $Rx$ such that $x$ is a non-zero-divisor of $R$, then the Krull dimensions of $\prod\limits_{n\geqslant1}R/(x^{n})$ and  $\prod\limits_{n\geqslant1}R$ are infinite.
\end{corollary}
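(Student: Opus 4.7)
The plan is to exhibit, for the ring $A = \prod_{n\geqslant 1}R/(x^{n})$, an element of $\prod_{n\geqslant1}\mathfrak{N}\bigl(R/(x^{n})\bigr)$ that fails to be nilpotent in $A$, and then invoke Theorem \ref{Theorem 2 Krull} to conclude $\dim(A) = \infty$. The dimension of $\prod_{n\geqslant1}R$ will follow by a surjection.

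First I would note that since $x^{n}\in(x^{n})$, the maximal ideal $Rx/(x^{n})$ of the local ring $R/(x^{n})$ is nilpotent, so each $R/(x^{n})$ is a zero-dimensional local ring and $\mathfrak{N}(R/(x^{n})) = Rx/(x^{n})$. Thus the element $\alpha = (x+(x^{n}))_{n\geqslant1}$ lies in $\prod_{n\geqslant1}\mathfrak{N}(R/(x^{n}))$. The key step is to check that $\alpha$ is \emph{not} nilpotent in $A$. If it were, there would exist $m\geqslant1$ with $x^{m}\in(x^{n})$ for every $n\geqslant1$; picking $n = m+1$ yields $x^{m} = rx^{m+1}$ for some $r\in R$, whence $x^{m}(1-rx) = 0$. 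Because $x$ is a non-zero-divisor, so is $x^{m}$, giving $1 = rx$; but $Rx$ is a proper (maximal) ideal, so this is a contradiction. Hence the inclusion $\mathfrak{N}(A)\subsetneq\prod_{n\geqslant1}\mathfrak{N}(R/(x^{n}))$ is strict, and Theorem \ref{Theorem 2 Krull} (applied to the zero-dimensional factors $R/(x^{n})$) forces $\dim(A) = \infty$.

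For the second ring, I would use the componentwise surjection $\prod_{n\geqslant1}R \twoheadrightarrow \prod_{n\geqslant1}R/(x^{n})$ induced by the canonical quotient maps (surjectivity was recorded in the paragraph preceding Corollary \ref{Corollary 3}). Since a quotient ring of a ring $B$ satisfies $\dim(B/J)\leqslant\dim(B)$, we obtain $\dim\bigl(\prod_{n\geqslant1}R\bigr) \geqslant \dim(A) = \infty$.

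The only delicate step is ruling out nilpotency of $\alpha$, which is precisely where both hypotheses (maximality of $Rx$ and $x$ being a non-zero-divisor) are used simultaneously; everything else is bookkeeping with Theorem \ref{Theorem 2 Krull} and a surjection of rings.
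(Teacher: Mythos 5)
Your proposal is correct and follows essentially the same route as the paper: exhibit the non-nilpotent element $(x+(x^{n}))_{n\geqslant1}$ of $\prod_{n\geqslant1}\mathfrak{N}(R/(x^{n}))$, invoke Theorem \ref{Theorem 2 Krull} to get infinite dimension for $\prod_{n\geqslant1}R/(x^{n})$, and pass to $\prod_{n\geqslant1}R$ via the canonical surjection. The only cosmetic difference is that you identify $\mathfrak{N}(R/(x^{n}))=Rx/(x^{n})$ explicitly via locality, whereas the paper merely notes that $x+(x^{n})$ is nilpotent; both suffice.
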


\begin{proof} Each $a_{n}:=x+(x^{n})\in R/(x^{n})$ is nilpotent, whereas the element $a=(a_{1},a_{2},a_{3},\ldots)$ is not nilpotent, because if $a^{d}=0$ for some $d\geqslant1$, then $x^{d}\in(x^{d+1})$ and so $x^{d}=rx^{d+1}$ for some $r\in R$, but $x^{d}$ is non-zero-divisor so we get that $x$ is an invertible in $R$ which is a contradiction. Also each $R/(x^{n})$ is zero-dimensional. Thus by Theorem \ref{Theorem 2 Krull}, the Krull dimension of $\prod\limits_{n\geqslant1}R/(x^{n})$ is infinite. The canonical surjective ring map $\prod\limits_{n\geqslant1}R
\rightarrow\prod\limits_{n\geqslant1}R/(x^{n})$ induces an injective map between the corresponding prime spectra. It follows that the Krull dimension of $\prod\limits_{n\geqslant1}R$ is infinite.
\end{proof}

In particular, the Krull dimensions of $\prod\limits_{n\geqslant1}\mathbb{Z}/p^{n}\mathbb{Z}$ and $\prod\limits_{n\geqslant1}\mathbb{Z}$ are infinite.

As another application of Theorem \ref{Theorem 1 IDP}, we  give a more complete proof of our recent result \cite[Corollary 3.12]{Tarizadeh-Chen 2}. First note that by an avoidance ring we mean a ring $R$ such that every ideal $I$ of $R$ has the ideal avoidance property. This means that whenever $I_{1},\ldots,I_{n}$ are finitely many ideals of $R$ with $I \subseteq \bigcup\limits_{k=1}^{n}I_{k}$, then $I\subseteq I_{k}$ for some $k$.

\begin{theorem}\label{Theorem 3 avoidance} Let $(R_{i})_{i\in S}$ be a family of avoidance rings. If $R=\prod\limits_{i\in S}R_{i}$ modulo the ideal $I=\bigoplus\limits_{i\in S}R_{i}$ is an avoidance ring, then $R$ is an avoidance ring.
\end{theorem}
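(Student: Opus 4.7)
The plan is to take ideals $J\subseteq J_{1}\cup\cdots\cup J_{n}$ of $R$ and produce some index $k^{\ast}$ with $J\subseteq J_{k^{\ast}}$. For each finite subset $F\subseteq S$ set $e_{F}=\sum_{i\in F}e_{i}\in I$ and $I_{F}=I\cap(1-e_{F})R$. The key observation is that $I$ and $(1-e_{F})R$ are comaximal: any $r\in R$ decomposes as $r=e_{F}r+(1-e_{F})r$ with $e_{F}r\in I$. Hence by the Chinese Remainder Theorem,
\[
R/I_{F}\;\cong\;R/I\,\times\,R/(1-e_{F})R\;\cong\;R/I\,\times\,e_{F}R\;\cong\;R/I\,\times\,\prod_{i\in F}R_{i}.
\]

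Since $R/I$ is an avoidance ring by hypothesis and each $R_{i}$ is an avoidance ring, a short ``project-and-lift'' induction on the number of factors shows that any finite product of avoidance rings is an avoidance ring; in particular $R/I_{F}$ is an avoidance ring. Applying its avoidance property to the images of the covering $J\subseteq\bigcup_{k}J_{k}$ under the quotient $R\to R/I_{F}$ yields some $k(F)\in\{1,\ldots,n\}$ with $J\subseteq J_{k(F)}+I_{F}$. Put $V(F)=\{k:J\subseteq J_{k}+I_{F}\}\subseteq\{1,\ldots,n\}$, so each $V(F)$ is nonempty. Moreover $F\subseteq F'$ implies $(1-e_{F'})R\subseteq(1-e_{F})R$, hence $I_{F'}\subseteq I_{F}$, giving $V(F')\subseteq V(F)$. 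Thus $\{V(F)\}$ has the finite intersection property in the finite set $\{1,\ldots,n\}$, so $\bigcap_{F}V(F)\neq\emptyset$; fix any $k^{\ast}$ in this intersection.

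To conclude $J\subseteq J_{k^{\ast}}$, let $x\in J$. Using $k^{\ast}\in V(\emptyset)$ (noting $I_{\emptyset}=I$), write $x=y+z$ with $y\in J_{k^{\ast}}$ and $z\in I$, and let $F_{0}=\{i\in S:z_{i}\neq 0\}$, a finite set. Using $k^{\ast}\in V(F_{0})$, write $x=y'+z'$ with $y'\in J_{k^{\ast}}$ and $z'\in I_{F_{0}}$, so $z'_{i}=0$ for every $i\in F_{0}$. Subtracting yields $y-y'=z'-z\in J_{k^{\ast}}$; multiplying by $e_{F_{0}}$ and noting $e_{F_{0}}z=z$ while $e_{F_{0}}z'=0$, we obtain $z=-e_{F_{0}}(y-y')\in J_{k^{\ast}}$, whence $x=y+z\in J_{k^{\ast}}$.

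The main obstacle is conceptual: spotting that one should quotient by the comaximal pair $(I,(1-e_{F})R)$ so that $R/I_{F}$ simultaneously encodes both the $R/I$-avoidance (controlling $J$ up to finite-support perturbations) and the finite-product avoidance (controlling the restriction to the $F$-coordinates). Once the ring $R/I_{F}$ is in place, the finite-intersection-property pigeonhole in $\{1,\ldots,n\}$ and the head-tail splitting via $e_{F_{0}}$ are routine.
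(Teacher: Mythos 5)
Your proof is correct, and it takes a genuinely different route from the paper's. The paper does not argue from the definition at all: it invokes the Quartararo--Butts criterion that a ring is an avoidance ring if and only if for every maximal ideal $M$ either $R/M$ is infinite or $R_{M}$ is B\'{e}zout, and then verifies this locally using the tame/wild dichotomy for maximal ideals -- if $M\supseteq I$ then $IR_{M}=0$ (since each $1-e_{k}\notin M$), so $R_{M}\simeq (R/I)_{M/I}$ and the hypothesis on $R/I$ applies; if $M\not\supseteq I$ then $M=\pi_{k}^{-1}(M_{k})$ is tame, $R/M\simeq R_{k}/M_{k}$ and $R_{M}\simeq (R_{k})_{M_{k}}$, and the hypothesis on $R_{k}$ applies. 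That argument is shorter but leans on an external structure theorem; yours is self-contained and works directly with coverings, at the cost of the comaximal-CRT bookkeeping, the finite-intersection pigeonhole over finite subsets $F$, and the head--tail splitting via $e_{F_{0}}$ (all of which check out: $I+(1-e_{F})R=R$ since $e_{F}\in I$, the sets $V(F)$ are nonempty and decreasing, and $e_{F_{0}}z=z$, $e_{F_{0}}z'=0$ give $z\in J_{k^{\ast}}$). The one place you should add detail is the sublemma that a finite product of avoidance rings is an avoidance ring: the naive coordinatewise projection only yields an index for each factor separately, not a common one. It is true, but the two-factor step genuinely needs the avoidance property of the first factor -- letting $K=\{k: J^{A}\subseteq J^{A}_{k}\}$, avoidance in $A$ produces $a\in J^{A}$ lying in no $J^{A}_{k}$ with $k\notin K$, and pairing $a$ with arbitrary $b\in J^{B}$ shows $J^{B}\subseteq\bigcup_{k\in K}J^{B}_{k}$, after which avoidance in $B$ finishes. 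With that spelled out, your argument is a complete alternative proof.
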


\begin{proof} It will be enough to show that $R$ satisfies \cite[Theorem 2.6(c)]{Quartararo-Butts} which asserts that a ring $R$ is an avoidance ring if and only if for each maximal ideal $M$ of $R$, either the field $R/M$ is infinite or $R_{M}$ is a B\'{e}zout ring (i.e. every finitely generated ideal is principal).
Let $M$ be a maximal ideal of $R$. If $I\subseteq M$ then by hypothesis and \cite[Theorem 2.6]{Quartararo-Butts}, either the field $(R/I)/(M/I)\simeq R/M$ is infinite or the localization $(R/I)_{M/I}\simeq(R/I)_{M}\simeq R_{M}/IR_{M}$ is a B\'{e}zout ring. But $I$ is generated by the idempotents $e_{k}$ with $k\in S$. So its extension $IR_{M}$ is generated by the elements $e_{k}/1$. Since each $1-e_{k}\in R\setminus M$, so $e_{k}/1=0$. Thus $IR_{M}=0$.
Hence, $R$ satisfies the condition (c) in this case.
Now assume $M$ does not contain $I$. Then by Theorem \ref{Theorem 1 IDP} (or, by Corollary \ref{Corollary I}), $M$ is a tame prime of $R$. So $M=\prod\limits_{i}M_{i}$ where $M_{k}=\pi_{k}(M)$ is a maximal ideal of $R_{k}$ for some $k$ and $M_{i}=R_{i}$ for all $i\neq k$. But if $R/M\simeq R_{k}/M_{k}$ is a finite field, then by \cite[Theorem 2.6]{Quartararo-Butts},  $R_{M}\simeq(R_{k})_{M_{k}}$ is a B\'{e}zout ring.
\end{proof}

\section{Tame and wild connected components}

In this section, we investigate the connected components of the prime spectrum of an infinite direct product of rings.

\begin{remark}\label{Remark 7 st} We will use the following set-theoretical observation in the next results. If $(A_{k})$ is a family of finite nonempty sets (that is, $1\leqslant|A_{k}|<\infty$ for all $k$) indexed by an infinite set $S$, then the disjoint union $\coprod\limits_{k\in S}A_{k}=\bigcup\limits_{k\in S}\{k\}\times A_{k}$ is in bijection with $S$. Indeed, for each $k\in S$ we may choose some $x_{k}\in A_{k}$, because it is nonempty. Thus
the map $S\rightarrow\coprod\limits_{k\in S}A_{k}$ given by $k\mapsto(k,x_{k})$ is injective. Hence, $|S|\leqslant\beta$ where $\beta$ denotes the cardinality of
$\coprod\limits_{k\in S}A_{k}$. To see the reverse inequality, we act as follows. Each $A_{k}$ is in bijection with a natural number $n_{k}\geqslant1$. Remember that every natural number $n$ is the set of all natural numbers strictly less than $n$, i.e., $0=\{\}$, $1=\{0\}$, $2=\{0,1\}$, $3=\{0,1,2\}$ and so on.
Thus $\coprod\limits_{k\in S}A_{k}$ is in bijection with $\coprod\limits_{k\in S}n_{k}=\bigcup\limits_{k\in S}\{k\}\times n_{k}\subseteq S\times\mathbb{N}$. Then using Remark \ref{Remark ii}, we have $\beta\leqslant|S|\cdot\aleph_{0}\leqslant |S|\cdot|S|=|S|$. Therefore, $\beta=|S|$.
\end{remark}

For any ring $R$ by $\mathcal{B}(R)=\{e\in R: e=e^{2}\}$ we mean the set of all idempotents of $R$ which is a commutative ring whose addition is $e\oplus e':=e+e'-2ee'$ and whose multiplication is $e\cdot e'=ee'$. We call $\mathcal{B}(R)$ the Boolean ring of $R$. For more information on this ring we refer the interested reader to \cite{Tarizadeh-Taheri}.

\begin{theorem}\label{Thm ccc} If  $R=\prod\limits_{k\in S}R_{k}$ with $S$ infinite, then the set of connected components of $\Spec(R)$ has the cardinality $\geqslant2^{2^{|S|}}$. If moreover, each $R_{k}$ has only finitely many idempotents then the equality holds.
\end{theorem}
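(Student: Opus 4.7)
The plan is to identify the set of connected components of $\Spec(R)$ with the prime spectrum of the Boolean ring of idempotents $\mathcal{B}(R)$, and then exploit the fact that $\mathcal{B}(-)$ commutes with arbitrary direct products. Recall from the Pierce decomposition (as discussed in \cite{Tarizadeh-Taheri}) that for any commutative ring $R$ the continuous surjective map $\Spec(R)\to\Spec(\mathcal{B}(R))$, $\mathfrak{p}\mapsto\mathfrak{p}\cap\mathcal{B}(R)$, has as fibres precisely the connected components of $\Spec(R)$; in particular, the set of connected components of $\Spec(R)$ is in bijection with $\Spec(\mathcal{B}(R))$. Since a tuple $(e_{k})$ in $R=\prod_{k\in S}R_{k}$ is idempotent if and only if each $e_{k}$ is idempotent, and both the Boolean addition $\oplus$ and multiplication are computed coordinatewise, there is a canonical isomorphism $\mathcal{B}(R)\simeq\prod_{k\in S}\mathcal{B}(R_{k})$.

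For the lower bound I would argue as follows. Since each $R_{k}$ is nonzero, the Boolean ring $\mathcal{B}(R_{k})$ admits some maximal ideal $M_{k}$ with $\mathcal{B}(R_{k})/M_{k}\simeq\mathbb{Z}_{2}$. The product of these quotient maps is a surjective ring map
\[
\mathcal{B}(R)\simeq\prod_{k\in S}\mathcal{B}(R_{k})\twoheadrightarrow\prod_{k\in S}\mathbb{Z}_{2}\simeq\mathcal{P}(S),
\]
which induces a continuous injection $\Spec(\mathcal{P}(S))\hookrightarrow\Spec(\mathcal{B}(R))$. Combined with the cardinality $|\Spec(\mathcal{P}(S))|=2^{2^{|S|}}$ for infinite $S$ (as recorded in the proof of Corollary \ref{Corollary 2 card}), this gives $|\Spec(\mathcal{B}(R))|\geqslant 2^{2^{|S|}}$, hence the desired lower bound on the number of connected components.

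For the equality under the hypothesis that every $R_{k}$ has only finitely many idempotents, each finite Boolean ring $\mathcal{B}(R_{k})$ is isomorphic to $\mathbb{Z}_{2}^{A_{k}}$ for some finite nonempty set $A_{k}$ (classification of finite Boolean rings via their atoms). Therefore
\[
\mathcal{B}(R)\simeq\prod_{k\in S}\mathbb{Z}_{2}^{A_{k}}\simeq\prod_{(k,a)\in\coprod_{k\in S}A_{k}}\mathbb{Z}_{2}\simeq\mathcal{P}\bigl(\coprod_{k\in S}A_{k}\bigr),
\]
and Remark \ref{Remark 7 st} supplies $\bigl|\coprod_{k\in S}A_{k}\bigr|=|S|$. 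Consequently $\mathcal{B}(R)\simeq\mathcal{P}(S)$ (non-canonically, via any bijection of indexing sets), so the number of connected components of $\Spec(R)$ equals $|\Spec(\mathcal{P}(S))|=2^{2^{|S|}}$.

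The step I expect to be the main obstacle is the very first one, namely cleanly justifying the Pierce identification of connected components of $\Spec(R)$ with $\Spec(\mathcal{B}(R))$; everything after that reduces to elementary manipulations of Boolean rings together with the cardinal bookkeeping already available in Remark \ref{Remark 7 st} and Corollary \ref{Corollary 2 card}. If one does not wish to appeal to a black-box Pierce theorem, the identification can be established directly by checking that, for each prime $\mathfrak{q}\in\Spec(\mathcal{B}(R))$, the subset $\bigcap_{e\in\mathfrak{q}}V(e)\cap\bigcap_{e\notin\mathfrak{q}}D(e)$ of $\Spec(R)$ is a nonempty connected clopen subset, and that these subsets exhaust and partition $\Spec(R)$.
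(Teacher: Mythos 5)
Your proposal is correct and follows the same overall architecture as the paper's proof: identify the set of connected components with $\Spec(\mathcal{B}(R))$ via \cite[Theorem 4.1]{Tarizadeh-Taheri}, use $\mathcal{B}(R)\simeq\prod_{k\in S}\mathcal{B}(R_{k})$, compare with the power set ring $\mathcal{P}(S)$ and its Stone--\v{C}ech cardinality $2^{2^{|S|}}$, and for the equality case invoke the classification of finite Boolean rings together with Remark \ref{Remark 7 st}. The one step you handle differently is the lower bound. The paper uses the \emph{injective} ring map $\mathcal{P}(S)\rightarrow\mathcal{B}(R)$ induced by the inclusions $\mathbb{Z}_{2}\rightarrow\mathcal{B}(R_{k})$, and then needs the lying-over argument of Remark \ref{Remark iv lying over} (together with the fact that $\mathcal{P}(S)$ is zero-dimensional, so every prime is minimal) to produce an injection $\Spec(\mathcal{P}(S))\rightarrow\Spec(\mathcal{B}(R))$. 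You instead go in the opposite direction: choosing a maximal ideal $M_{k}$ of each $\mathcal{B}(R_{k})$ (with residue field necessarily $\mathbb{Z}_{2}$, as in any Boolean ring) yields a \emph{surjection} $\mathcal{B}(R)\twoheadrightarrow\mathcal{P}(S)$, whose induced map on spectra is a closed embedding for free. Your version avoids the lying-over machinery entirely and is arguably the more economical route to the lower bound; both versions use the axiom of choice in choosing one prime per factor. The equality argument is identical to the paper's.
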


\begin{proof} By \cite[Theorem 4.1]{Tarizadeh-Taheri}, the set of connected components of $\Spec(R)$ endowed with the quotient topology is canonically homeomorphic to the prime spectrum of $\mathcal{B}(R)$. Also each $R_{k}$ is a nonzero ring, so the ring map $\mathcal{P}(S)\simeq\prod\limits_{k\in S}\mathbb{Z}_{2}\rightarrow\mathcal{B}(R)=\prod\limits_{k\in S}\mathcal{B}(R_{k})$ induced by the canonical injective ring maps $\mathbb{Z}_{2}\rightarrow\mathcal{B}(R_{k})$ is injective. Then using Remark \ref{Remark iv lying over} and the facts that the ring $\mathcal{P}(S)$ is zero dimensional and $\Spec(\mathcal{P}(S))$ is the Stone-\v{C}ech compactification of the discrete space $S$, we conclude that the prime spectrum of $\mathcal{B}(R)$ and so the space of connected components of $\Spec(R)$ have the cardinality $\geqslant2^{2^{|S|}}$. If each $R_{k}$ has finitely many idempotents, then using Chinese remainder theorem we observe that each finite Boolean ring $\mathcal{B}(R_{k})$ is canonically isomorphic to $\prod\limits_{i=1}^{n_{k}}\mathbb{Z}_{2}$ where the natural number $n_{k}\geqslant1$ denotes the number of prime ideals of $\mathcal{B}(R_{k})$. It follows that $\mathcal{B}(R)\simeq\prod\limits_{k\in S}(\prod\limits_{i=1}^{n_{k}}\mathbb{Z}_{2})\simeq\mathcal{P}(X)$ where $X:=\coprod\limits_{k\in S}\Spec(\mathcal{B}(R_{k}))$.
By Remark \ref{Remark 7 st}, the set $X$ is in bijection with $S$. Hence, $\Spec(\mathcal{B}(R))$ is the Stone-\v{C}ech compactification of the discrete space $S$. Thus the prime spectrum of $\mathcal{B}(R)$ and so the space of connected components of $\Spec(R)$ have the cardinality $2^{2^{|S|}}$.
\end{proof}

\begin{lemma}\label{Lemma 3 nice ob} Let $R=\prod\limits_{k\in S}R_{k}$. Then every connected component of $\Spec(R)$ is contained in the set of tame primes or in the set of wild primes of $R$.
\end{lemma}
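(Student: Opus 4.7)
The plan rests on a single structural observation: each idempotent $e_k$ of $R$ gives rise to a clopen subset $D(e_k) = V(1-e_k)$ of $\Spec(R)$, because $e_k(1-e_k) = 0$ and $e_k + (1-e_k) = 1$ imply $D(e_k)$ and $D(1-e_k)$ form a disjoint cover by basic opens. The standard topological fact I will invoke is that any connected component $C$ of a topological space must satisfy either $C \subseteq U$ or $C \cap U = \emptyset$ for every clopen subset $U$; this follows since otherwise $(C \cap U, C \setminus U)$ would be a nontrivial separation of $C$ into two sets that are both open in $C$.

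Concretely, let $C$ be a connected component of $\Spec(R)$ and consider the family $\{D(e_k)\}_{k \in S}$ of clopen subsets. There are two mutually exclusive cases. \textbf{Case 1:} there exists $k \in S$ with $C \cap D(e_k) \neq \emptyset$. Then by the remark above $C \subseteq D(e_k)$, and since $D(e_k) \subseteq \bigcup_{i \in S} D(e_i) = T$, the set of tame primes as identified in the proof of Theorem \ref{Theorem 1 IDP}, we conclude $C$ consists entirely of tame primes. \textbf{Case 2:} $C \cap D(e_k) = \emptyset$ for every $k \in S$. Then $C \subseteq \bigcap_{k \in S} V(e_k) = V(I)$ where $I = \bigoplus_{k \in S} R_k$, and by Corollary \ref{Corollary I} the set $V(I)$ is precisely the set of wild primes, so $C$ consists entirely of wild primes.

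I expect no genuine obstacle here — the only subtlety to be careful about is that the partition $\Spec(R) = T \sqcup V(I)$ itself need not be a clopen partition when $S$ is infinite (indeed $T$ fails to be closed, by Corollary \ref{coro 5 wno}), so one cannot simply invoke connectedness against the pair $(T, V(I))$ directly. The correct move, as sketched above, is to argue component-by-component against each individual clopen $D(e_k)$, and only afterwards package the conclusion in terms of the (non-clopen) dichotomy tame/wild.
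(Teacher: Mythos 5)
Your proof is correct and rests on exactly the same idea as the paper's: each $D(e_{k})$ is clopen (since $e_{k}$ is idempotent), so a connected component is either contained in some $D(e_{k})$ (hence consists of tame primes, by the identification $T=\bigcup_{k}D(e_{k})$ from Theorem \ref{Theorem 1 IDP}) or misses every $D(e_{k})$ (hence lies in $V(I)$, the set of wild primes, by Corollary \ref{Corollary I}). The paper merely phrases this as a contradiction—assuming $C$ contains both a tame and a wild prime and splitting $C$ by the clopen pair $D(e_{k})$, $D(1-e_{k})$—so the two arguments differ only in presentation.
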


\begin{proof} Let $C$ be a connected component of $\Spec(R)$. Suppose $\mathfrak{p}$ is a tame prime and $\mathfrak{q}$ is a wild prime of $R$ so that $\mathfrak{p},\mathfrak{q}\in C$. Then there exists some $k\in S$ such that $\mathfrak{p}\in U:=C\cap D(e_{k})$ and $\mathfrak{q}\in V:=C\cap V(e_{k})=C\cap D(1-e_{k})$. Clearly $C$ is covered by the nonempty disjoint open subsets $U$ and $V$ which violates the connectedness of $C$.
\end{proof}

\begin{remark} It is well known that for any ring $R$, then every connected component of the prime spectrum $\Spec(R)$ is precisely of the form $V(M)$ where $M$ is a max-regular ideal of $R$. Remember that every maximal element of the set of proper regular ideals of $R$ is called a max-regular ideal of $R$. Here, by a regular ideal we mean an ideal of $R$ which is generated by a set of idempotents of $R$. In fact, the map $M\mapsto V(M)$ is a bijection from the set of max-regular ideals of $R$ onto the set of connected components of its prime spectrum. For the details see e.g. \cite[Theorem 3.17]{Tarizadeh 3}. If $\mathfrak{p}$ is a prime ideal of $R$, then clearly $(e\in\mathfrak{p}: e=e^{2})$ is a max-regular ideal of $R$. If $I$ and $J$ are regular ideals of $R$ such that $V(I)=V(J)$, then $I=J$.
\end{remark}

Now Lemma \ref{Lemma 3 nice ob} together with the above remark leads us to the following definition.

\begin{definition} By a \emph{tame max-regular ideal of $R=\prod\limits_{k\in S}R_{k}$} we mean a max-regular ideal $M$ of $R$ such that $V(M)$ is contained in the set of tame primes of $R$. Dually, by a \emph{wild max-regular ideal of $R$} we mean a max-regular ideal $M$ of $R$ such that $V(M)$ is contained in the set of wild primes of $R$.
\end{definition}

By \cite[Theorem 4.1]{Tarizadeh-Taheri}, tame (resp. wild) max-regular ideals of $R=\prod\limits_{k\in S}R_{k}$ are in one-to-one correspondence with tame (resp. wild) primes of the Boolean ring $\mathcal{B}(R)=\prod\limits_{k\in S}\mathcal{B}(R_{k})$. Also, we have precisely two types of connected components in $\Spec(R)$: tame connected components and wild connected components.
In the following result, the structure of tame connected components is characterized.

\begin{theorem}\label{Theorem tmri ng} Tame max-regular ideals of $R=\prod\limits_{i\in S}R_{i}$ are precisely of the form $\pi_{k}^{-1}(M)$ where $M$ is a max-regular ideal of $R_{k}$ for some $k$.
\end{theorem}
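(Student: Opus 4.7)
The plan is to prove both inclusions by combining two ingredients: the bijective correspondence between max-regular ideals of a ring and connected components of its prime spectrum (stated in the remark preceding the definition), and the fact that the sets $D(e_k)$ are pairwise disjoint clopen subsets of $\Spec(R)$, each canonically homeomorphic to $\Spec(R_k)$ via $\pi_k^{\ast}$.

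For the inclusion that every ideal of the form $\pi_k^{-1}(M)$, with $M$ a max-regular ideal of $R_k$, is a tame max-regular ideal of $R$, I first verify that $\pi_k^{-1}(M)$ is a regular ideal. If $M$ is generated by idempotents $\{f_{\alpha}\}$ of $R_k$, then $\pi_k^{-1}(M)$ is generated as an ideal of $R$ by the idempotents $\{f_{\alpha}e_k\}$ together with $\{e_i : i \neq k\}$, where $f_{\alpha}e_k$ denotes the element of $R$ with $k$-th coordinate $f_{\alpha}$ and zero elsewhere. It is a proper ideal because $1 \notin M$. For max-regularity I go through topology: the homeomorphism $\pi_k^{\ast} : \Spec(R_k) \to D(e_k)$ carries the connected component $V(M) \subseteq \Spec(R_k)$ to $V(\pi_k^{-1}(M)) \subseteq \Spec(R)$, and since $D(e_k)$ is clopen in $\Spec(R)$ the image remains a connected component of $\Spec(R)$. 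By the bijective correspondence, $\pi_k^{-1}(M)$ is max-regular, and it is tame because $V(\pi_k^{-1}(M)) \subseteq D(e_k)$ lies entirely in the tame primes.

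Conversely, let $M$ be a tame max-regular ideal of $R$, so $V(M)$ is a connected component of $\Spec(R)$ contained in the tame locus $T = \bigcup_{k \in S} D(e_k)$. Because $e_k e_j = 0$ for $k \neq j$, the family $\{D(e_k)\}_{k \in S}$ consists of pairwise disjoint clopen subsets of $\Spec(R)$; hence the nonempty connected set $V(M)$ must be contained in a single $D(e_k)$. Under $\pi_k^{\ast}$, the connected component $V(M)$ of $\Spec(R)$ corresponds to a connected component of $\Spec(R_k)$, which by the correspondence for $R_k$ equals $V(M')$ for some max-regular ideal $M'$ of $R_k$. Then $V(M) = \pi_k^{\ast}(V(M')) = V(\pi_k^{-1}(M'))$, and since two regular ideals with the same vanishing locus are equal, $M = \pi_k^{-1}(M')$.

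The main technical hurdle is establishing max-regularity (and not merely regularity) of $\pi_k^{-1}(M)$ in the first direction; a direct algebraic route would require ruling out every proper regular ideal strictly containing $\pi_k^{-1}(M)$. I sidestep this via the topological equivalence, relying on the crucial fact that $D(e_k)$ is clopen in $\Spec(R)$, so that a connected component of $D(e_k)$ is automatically a connected component of the ambient space $\Spec(R)$.
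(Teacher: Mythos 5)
Your overall strategy is sound and genuinely different from the paper's. The paper argues algebraically: it exhibits $\pi_k^{-1}(M)=R(1-e_k)+(ae_k: a\in M,\ a=a^2)$, notes that $\pi_k$ induces an isomorphism $R/\pi_k^{-1}(M)\simeq R_k/M$, and invokes the criterion that a proper regular ideal is max-regular if and only if the quotient has no nontrivial idempotents; for the converse it traps the tame max-regular ideal $\mathcal{M}$ inside $\pi_k^{-1}(N)$, where $N$ is generated by the idempotents of a tame prime containing $\mathcal{M}$, and concludes by maximality. You instead go through the homeomorphism $\Spec(R_k)\simeq D(e_k)$, the observation that a connected component of a clopen subset is a connected component of the ambient space, and the bijection between max-regular ideals and connected components together with the fact that regular ideals are determined by their vanishing loci. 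That route works and is arguably cleaner --- but only once the regularity of $\pi_k^{-1}(M)$ is actually established, and that is where your proof has a concrete error.

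The generating set you propose is wrong when $S$ is infinite. The ideal of $R$ generated by $\{f_{\alpha}e_k\}\cup\{e_i : i\neq k\}$ consists of elements having finite support outside the $k$-th coordinate; it equals $M\times\bigoplus\limits_{i\neq k}R_i$, which is strictly contained in $\pi_k^{-1}(M)=M\times\prod\limits_{i\neq k}R_i$ (for instance, $1-e_k$ lies in the latter but not in the former). So as written you have not shown that $\pi_k^{-1}(M)$ is a regular ideal, and both directions of your argument lean on this: in the forward direction you need it in order to identify $\pi_k^{-1}(M)$ with the max-regular ideal attached to the connected component $V(\pi_k^{-1}(M))$, and in the converse you need it in order to apply the fact that two regular ideals with the same vanishing locus coincide when deducing $M=\pi_k^{-1}(M')$. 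The repair is immediate: replace the family $\{e_i : i\neq k\}$ by the single idempotent $1-e_k$, whose principal ideal is all of $0\times\prod\limits_{i\neq k}R_i$; this is exactly what the paper's formula $R(1-e_k)+(ae_k : a\in M,\ a=a^2)$ accomplishes. With that one change the rest of your topological argument goes through.
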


\begin{proof} We first show that $\pi_{k}^{-1}(M)$ is a tame max-regular ideal of $R$. If $a\in R_{k}$ then $ae_{k}$ denotes the element in $R$ which has $a$ in the $k$ component and zero in all other components. Then we claim that $\pi_{k}^{-1}(M)=R(1-e_{k})+(ae_{k}: a\in M, a=a^{2})$. The inclusion $R(1-e_{k})+(ae_{k}: a\in M, a=a^{2})\subseteq M$ is obvious. To see the reverse inclusion, take $r=(r_{i})\in\pi_{k}^{-1}(M)$. Then consider the elements $r'=(r'_{i})$ and $r''=(r''_{i})$ in $R$ with $r'_{i}:=r_{i}$ and $r''_{i}:=0$ for all $i\neq k$, $r'_{k}=0$ and $r''_{k}=r_{k}$. Since $r_{k}\in M$, so $r''\in(ae_{k}: a\in M, a=a^{2})$. We also have $r=r'+r''$ where $r'=r'(1-e_{k})\in R(1-e_{k})$. This establishes the claim. This shows that $M':=\pi^{-1}(M)$ is a proper regular ideal of $R$ and $V(M')$ is contained in the set of tame primes of $R$. The map $R/M'\rightarrow R_{k}/M$ induced by $\pi_{k}:R\rightarrow R_{k}$ is an isomorphism of rings. Then using \cite[Lemma 3.19]{Tarizadeh 3} which asserts that a proper regular ideal $I$ of a ring $R$ is max-regular if and only if $R/I$ has no nontrivial idempotents, we have  $R/M'$ has no nontrivial idempotents and so $M'$ is a max-regular ideal of $R$. Conversely, let $\mathcal{M}$ be a tame max-regular ideal of $R$. Since $\mathcal{M}\neq R$, then $\mathcal{M}\subseteq P$ for some prime ideal $P$ of $R$. Thus $P$ is a tame prime of $R$. Hence, $P=\pi_{k}^{-1}(\mathfrak{p})$ where $\mathfrak{p}$ is a prime ideal of $R_{k}$ for some $k$. Note that $N:=(e\in\mathfrak{p}: e=e^{2})$ is a max-regular ideal of $R_{k}$. Clearly $\mathcal{M}\subseteq\pi_{k}^{-1}(N)$. But in the above, we observed that $\pi_{k}^{-1}(N)$ is a proper regular ideal of $R$. Hence, $\mathcal{M}=\pi_{k}^{-1}(N)$.
\end{proof}

\begin{remark}\label{Remark 8 chg} Regarding Theorem \ref{Theorem tmri ng}, note that if $M\subset R_{k}$ and $N\subset R_{d}$ are max-regular ideals with $\pi_{k}^{-1}(M)=\pi_{d}^{-1}(N)$, then $k=d$ and $M=N$. For any ring $R$, the set of all max-regular ideals of $R$ is denoted by $\Sp(R)$.
Thus by Theorem \ref{Theorem tmri ng}, the map from the disjoint union $\coprod\limits_{k\in S}\Sp(R_{k})$ onto the set of tame max-regular ideals of $R=\prod\limits_{k\in S}R_{k}$ given by  $(k,M)\mapsto\pi_{k}^{-1}(M)$ is bijective.
\end{remark}

\begin{corollary} Let $R=\prod\limits_{i\in S}R_{i}$. Then $V(1-e_{k})$ is a connected component of $\Spec(R)$ if and only if $R_{k}$ has no nontrivial idempotents.
\end{corollary}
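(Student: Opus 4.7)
The plan is to identify $V(1-e_{k})$ with $\Spec(R_{k})$ topologically and then appeal to a standard criterion about clopen subsets. First, since $e_{k}$ is an idempotent satisfying $e_{k}+(1-e_{k})=1$ and $e_{k}(1-e_{k})=0$, every prime of $R$ contains exactly one of $e_{k}$ and $1-e_{k}$. Hence $V(1-e_{k})=D(e_{k})$, which is both closed and open in $\Spec(R)$.

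Next I would verify that $V(1-e_{k})$ is canonically homeomorphic to $\Spec(R_{k})$. The projection $\pi_{k}:R\to R_{k}$ is surjective and its kernel equals the principal ideal $R(1-e_{k})$: any $a=(a_{i})\in R$ with $a_{k}=0$ factors as $a=a(1-e_{k})$. Therefore the induced map $\pi_{k}^{\ast}:\Spec(R_{k})\to\Spec(R)$ is a homeomorphism onto the closed subset $V\bigl(R(1-e_{k})\bigr)=V(1-e_{k})$, in accordance with the description of points of $D(e_{k})$ as tame primes $\pi_{k}^{-1}(\mathfrak{q})$ already extracted in the proof of Theorem \ref{Theorem 1 IDP}.

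For the final step I would invoke the topological fact that a clopen subset $C$ of a space $X$ is a connected component of $X$ if and only if $C$ itself is connected: indeed, the component containing $C$ would otherwise be split by $C$ and its complement in that component, contradicting connectedness. Combining this with the homeomorphism $V(1-e_{k})\simeq\Spec(R_{k})$ reduces the statement to the classical equivalence that $\Spec(R_{k})$ is connected if and only if $\mathcal{B}(R_{k})=\{0,1\}$, i.e.\ if and only if $R_{k}$ has no nontrivial idempotents. No serious obstacle is expected; the work is essentially bookkeeping of what was already established in Theorem \ref{Theorem 1 IDP}.
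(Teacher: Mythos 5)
Your proof is correct, but it takes a genuinely different route from the paper's. You work purely topologically: you observe that $V(1-e_{k})=D(e_{k})$ is clopen, identify it with $\Spec(R_{k})$ via the surjection $\pi_{k}$ with kernel $R(1-e_{k})$, and then use the elementary facts that a nonempty clopen subset is a connected component if and only if it is connected, and that $\Spec(R_{k})$ is connected if and only if $R_{k}$ has no nontrivial idempotents. The paper instead argues through its max-regular ideal machinery: the forward direction invokes Theorem \ref{Theorem tmri ng} (the zero ideal of $R_{k}$ is max-regular, so $\pi_{k}^{-1}(0)=R(1-e_{k})$ is a tame max-regular ideal of $R$, hence $V(1-e_{k})$ is a component), and the converse uses the fact that a regular ideal $I$ with $V(I)$ a connected component must be max-regular, together with the criterion from \cite[Lemma 3.19]{Tarizadeh 3} that a proper regular ideal $I$ is max-regular if and only if $R/I$ has no nontrivial idempotents. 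Your argument is more self-contained and elementary, bypassing Theorem \ref{Theorem tmri ng} and the cited results on max-regular ideals entirely; the paper's version buys consistency with the section's overall framework, in which connected components are systematically parametrized by max-regular ideals. The only point you should make explicit is that $V(1-e_{k})$ is nonempty (which holds because each $R_{k}$ is assumed nonzero, so $\Spec(R_{k})\neq\emptyset$); without nonemptiness the clopen-component criterion you invoke would fail for the empty set.
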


\begin{proof} If $R_{k}$ has no nontrivial idempotents, then the zero ideal of $R_{k}$ is a max-regular ideal. Then by Theorem \ref{Theorem tmri ng}, $\pi_{k}^{-1}(0)=R(1-e_{k})$ is a max-regular ideal of $R$. Hence, $V(1-e_{k})$ is a connected component of $\Spec(R)$. To see the converse, first note that for any ring $R$, if $I$ is a regular ideal of $R$ such that $V(I)$ is a connected component of $\Spec(R)$, then $I$ is a max-regular ideal of $R$.
Hence, $R(1-e_{k})$ is a max-regular ideal of $R$ and so $R/R(1-e_{k})\simeq R_{k}$ has no nontrivial idempotents.
\end{proof}

\begin{corollary}\label{Corollary 4 dort-four} Let $R=\prod\limits_{k\in S}R_{k}$. If $S$ is infinite and each $R_{k}$ has finitely many idempotents, then the set of wild max-regular ideals of $R$ has the cardinality $2^{2^{|S|}}$.
\end{corollary}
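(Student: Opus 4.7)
The plan is to count tame and wild max-regular ideals separately and use the fact that wild $=$ total $-$ tame, combined with the total count already established in Theorem \ref{Thm ccc}.

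First I would invoke Theorem \ref{Thm ccc} together with the bijection between connected components of $\Spec(R)$ and max-regular ideals of $R$ (the remark preceding the definition of tame max-regular ideal) to conclude that, under the present hypotheses, the set $\Sp(R)$ of all max-regular ideals of $R$ has cardinality exactly $2^{2^{|S|}}$. Next I would bound the number of tame max-regular ideals. By Theorem \ref{Theorem tmri ng} and Remark \ref{Remark 8 chg}, the tame max-regular ideals of $R$ are in bijection with $\coprod_{k\in S}\Sp(R_{k})$, so the task reduces to estimating each $|\Sp(R_{k})|$. Since $R_{k}$ has only finitely many idempotents, its Boolean ring $\mathcal{B}(R_{k})$ is finite, hence isomorphic to $\prod_{i=1}^{n_{k}}\mathbb{Z}_{2}$ for some $n_{k}\geqslant 1$, and by the correspondence between max-regular ideals of $R_{k}$ and prime ideals of $\mathcal{B}(R_{k})$ we get $|\Sp(R_{k})|=n_{k}<\infty$.

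Having checked that each $\Sp(R_{k})$ is a nonempty finite set indexed by the infinite set $S$, I would apply Remark \ref{Remark 7 st} to conclude that $\bigl|\coprod_{k\in S}\Sp(R_{k})\bigr|=|S|$, and therefore the set of tame max-regular ideals of $R$ has cardinality exactly $|S|$. Finally, let $W$ denote the set of wild max-regular ideals of $R$; since every max-regular ideal is either tame or wild (by Lemma \ref{Lemma 3 nice ob} applied to $V(M)$, which is a connected component and hence lies entirely in one of the two regions), we have $|\Sp(R)|=|S|+|W|$. Using Cantor's theorem $|S|<2^{|S|}<2^{2^{|S|}}$ and the absorption identity from Remark \ref{Remark ii}, $|S|+|W|=\max\{|S|,|W|\}=2^{2^{|S|}}$ forces $|W|=2^{2^{|S|}}$, which is the desired equality.

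The argument is essentially a counting/subtraction once Theorem \ref{Thm ccc} and Theorem \ref{Theorem tmri ng} are in hand; the only step requiring a small additional observation is the finiteness of $\Sp(R_{k})$ under the idempotent-finiteness assumption, which I expect to be the mildly delicate point, since one must pass from ``finitely many idempotents in $R_{k}$'' to ``finitely many max-regular ideals of $R_{k}$'' via the Boolean ring $\mathcal{B}(R_{k})$ and the correspondence of \cite[Theorem 4.1]{Tarizadeh-Taheri}. Everything else is a direct application of the machinery already assembled in the paper.
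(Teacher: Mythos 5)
Your proposal is correct and follows essentially the same route as the paper's own proof: count all max-regular ideals via Theorem \ref{Thm ccc}, count the tame ones as $|S|$ via Theorem \ref{Theorem tmri ng}, Remark \ref{Remark 8 chg} and Remark \ref{Remark 7 st}, and subtract using Cantor's theorem and the absorption identity of Remark \ref{Remark ii}. Your extra justification that $\Sp(R_{k})$ is finite (via the finiteness of $\mathcal{B}(R_{k})$) is a sound elaboration of a step the paper states more briefly.
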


\begin{proof} By hypothesis, each $\Spec(R_{k})$ has finitely many connected components. Hence, each $\Sp(R_{k})$ is a nonempty finite set.
Thus by Remarks \ref{Remark 7 st} and $\ref{Remark 8 chg}$, the set of tame max-regular ideals of $R$ has the cardinality $|S|$. By Theorem \ref{Thm ccc}, the set of max-regular ideals of $R$ has the cardinality $2^{2^{|S|}}$. Then by Cantor's theorem: $|S|<2^{|S|}<2^{2^{|S|}}$ and using Remark \ref{Remark ii}, we obtain that the set of wild max-regular ideals of $R$ has the cardinality $2^{2^{|S|}}$.
\end{proof}

\begin{remark} If $S$ is infinite, then the direct sum ideal $\bigoplus\limits_{k\in S}R_{k}$ is contained in every wild max-regular ideal of $R=\prod\limits_{i\in S}R_{i}$. Also, if $e\in R$ is an idempotent such that both $\Supp(e)$ and $\Supp(1-e)$ are infinite, then $\bigoplus\limits_{k\in S}R_{k}+Re$ is a proper regular ideal of $R$, and so it is contained in a wild max-regular ideal of $R$.
But, exactly like wild primes, all of the wild max-regular ideals are non-constructive and hence we cannot give an explicit example of them. For instance, in a power set ring $\mathcal{P}(S)$, wild max-regular ideals and wild primes are the same. In fact, in a Boolean ring or more generally in a von-Neumann regular ring, max-regular ideals and prime ideals are the same, because in such a ring every ideal is a regular ideal.
\end{remark}

\textbf{Acknowledgments.} We would like to give sincere thanks to Professor Pierre Deligne who generously shared with us his very valuable and excellent ideas.



\end{document}